\newtheorem{thm}{Theorem}[section]
\newtheorem{lem}[thm]{Lemma}
\newtheorem{prop}[thm]{Proposition}
\theoremstyle{definition}
\newtheorem{dfn}[thm]{Definition}
\newtheorem{exple}[thm]{Example}
\newtheorem{remark}[thm]{Remark}
\theoremstyle{plain}
\newtheorem{cor}[thm]{Corollary}
\numberwithin{equation}{section}
\numberwithin{equation}{section}
\newcommand{\N}{\mathbb{N}}
\newcommand{\Q}{\mathbb{Q}}
\newcommand{\T}{\mathbb{T}}
\newcommand{\Z}{\mathbb{Z}}
\newcommand{\F}{\mathbb{F}}
\newcommand{\mfa}{\mathfrak{a}}
\newcommand{\m}{\mathfrak{m}}
\newcommand{\n}{\mathfrak{n}}
\newcommand{\p}{\mathfrak{p}}
\newcommand{\mfm}{\mathfrak{m}}
\newcommand{\mfn}{\mathfrak{n}}
\newcommand{\mfp}{\mathfrak{p}}
\newcommand{\GL}{\mathrm{GL}}
\newcommand{\Tr}{\mathrm{Tr}}
\newcommand{\new}{\mathrm{new}}
\newcommand{\ra}{\rightarrow}
\newcommand{\mrm}[1]{\mathrm{#1}}
\def\1{1\!\!1}
\newcommand{\pmat}[4]{ \begin{pmatrix} #1 & #2 \\ #3 & #4 \end{pmatrix}}
\newcommand{\psmat}[4]{\bigl( \begin{smallmatrix} #1 & #2 \\ #3 & #4 \end{smallmatrix} \bigr)}
\title[On mod $\p$ congruences for Drinfeld modular forms of level $\p\m$]{On mod $\p$ congruences for Drinfeld modular forms of level $\p\m$}
\author[T. Dalal]{Tarun Dalal}
\email{ma17resch11005@iith.ac.in}
\address{
Department of Mathematics \\
Indian Institute of Technology Hyderabad\\
Kandi, Sangareddy - 502285\\
INDIA. 
}
\author[N. Kumar]{Narasimha Kumar}
\email{narasimha@math.iith.ac.in}
\address{
Department of Mathematics \\
Indian Institute of Technology Hyderabad\\
Kandi, Sangareddy - 502285\\
INDIA. 
}
\keywords{Drinfeld modular forms, $\Theta$-Operator, Atkin-Lehner involution, Congruences, Hecke operators, Eigenvalue}
\subjclass[2010]{Primary 11F33, 11F52 ; Secondary 11F23, 11G09}
\date{}
\begin{document}
\begin{abstract}
In~\cite{CS04}, Calegari and Stein studied the congruences between classical cusp forms $S_k(\Gamma_0(p))$ of prime level
and made several conjectures about them. In~\cite{AB07} (resp., ~\cite{BP11}) the authors proved one of those conjectures
(resp., their generalizations). In this article, we study the analogous conjecture and its generalizations
for Drinfeld modular forms.
\end{abstract}

\maketitle

\section{Introduction and Statements of the main results}
In~\cite{CS04}, Calegari and Stein studied certain relations between the congruences among classical cusp forms $S_k(\Gamma_0(p))$ of prime level 
and the integral closures of their associated Hecke algebras. They have
made a series of conjectures and established connections between them.
One of these conjectures predicts  a precise formula for the index of $\T$ in its integral closure, 
where $\T$ is the algebra of Hecke operators acting on $S_k(\Gamma_0(p),\Z)$ 
generated over $\bar{\Z}_p$.

When $S_k(\Gamma_0(p))$ contains no oldforms (e.g., when $k = 2, 4, 6, 8, 10$, and $14$), then $U_p= - p^{\frac{k}{2}-1} w_p$, where $w_p$ is the Fricke involution. 
Let $S^+_k(\Gamma_0(p))$ (resp., $S^-_k(\Gamma_0(p))$)
denote the plus (resp., minus) eigenspace of $S_k(\Gamma_0(p))$ with respect to $w_p$, and  let $\T^{+} := \T/(U_p + p^{\frac{k}{2}-1 })$ (resp., $\T^{-} := \T/(U_p - p^{\frac{k}{2}-1 })$ )
be the quotient of the Hecke algebra $\T$. Note that $\T^{+}$ (resp., $\T^{-}$) preserves $S_k^+(\Gamma_0(p))$ (resp., $S_k^-(\Gamma_0(p))$). 
Calegari and Stein  (cf.~\cite[Conjecture $3$]{CS04}) conjectured that $\T^+$ and $\T^-$  are integrally closed.
Equivalently, any congruences among the Hecke eigenforms in $S_k(\Gamma_0(p),\bar{\Z}_p)$
can occur only between plus and minus eigenforms for $w_p$. They (cf.~\cite[Conjecture $4$]{CS04}) also conjectured that the eigenvalues of the Fricke involution on $f \in S_2(\Gamma_0(p))$ and $g \in S_4(\Gamma_0(p))$ have opposite signs if there is a  mod $p$ congruence between $g$ and the derivative of $f$. 
In~\cite{AB07}, Ahlgren and Barcau settled this conjecture affirmatively.  
\begin{thm}\label{Ahlgren Congruence}
Let $p\geq 5$ be a prime. Suppose that $f\in S_2(\Gamma_0(p),\bar{\Z}_p )$ and $g\in S_4(\Gamma_0(p), \bar{\Z}_p)$ are eigenforms 
for all Hecke operators and satisfy $\Theta f \equiv g \pmod \mfp$,
where $\mfp$ is the maximal ideal of $\bar{\Z}_p$.
Then the eigenvalues of $w_p$ for $f$ and $g$ have opposite signs.
\end{thm}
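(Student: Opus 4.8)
The plan is to convert the hypothesis into Hecke data away from $p$, repackage $\Theta f$ (where $\Theta = q\,\tfrac{d}{dq}$) as a genuine weight-$4$ holomorphic form on $\Gamma_0(p)$, and then read off the Atkin--Lehner sign of $g$ from an Eisenstein ``defect'' that carries the opposite sign. First I would record the eigenvalue shapes. Since $S_2(\SL_2(\Z))=S_4(\SL_2(\Z))=0$, both $S_2(\Gamma_0(p))$ and $S_4(\Gamma_0(p))$ consist entirely of $p$-new forms, so $U_p=-p^{k/2-1}w_p$ as operators there; writing $w_pf=\varepsilon_f f$ and $w_pg=\varepsilon_g g$ this gives $a_p(f)=-\varepsilon_f\in\{\pm1\}$ and $a_p(g)=-p\,\varepsilon_g$, so the assertion ``opposite signs'' is equivalent to $\varepsilon_f\varepsilon_g=-1$. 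A direct $q$-expansion computation shows that for $\ell\neq p$ the form $\Theta f$ is a $T_\ell$-eigenform with eigenvalue $\ell\,a_\ell(f)$; comparing with $\Theta f\equiv g\pmod{\mfp}$ yields the ``twist by the cyclotomic character'' congruences $a_\ell(g)\equiv\ell\,a_\ell(f)\pmod{\mfp}$ for all $\ell\neq p$ (equivalently $\overline{\rho}_g\cong\overline{\rho}_f\otimes\overline{\chi}$). These pin down the common residual eigensystem but say nothing at $p$, where $a_p(g)\equiv0$.

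Next I would realize $\overline{g}$ by a holomorphic lift on which $w_p$ is tractable. Consider the two weight-$4$ Serre-type derivatives $\vartheta_2 f:=\Theta f-\tfrac16 E_2 f$ and $\vartheta_2' f:=\Theta f-\tfrac16\,pE_2(pz)\,f$. Both lie in $S_4(\Gamma_0(p),\bar{\Z}_p)$, because $E_2(z)$ and $pE_2(pz)$ share the same quasimodular anomaly on $\Gamma_0(p)$, so each cancels the anomaly of $\Theta$. Since $pE_2(pz)\equiv0\pmod{p}$ we get $\vartheta_2' f\equiv\Theta f\equiv g\pmod{\mfp}$, so $\vartheta_2' f$ is a characteristic-$0$ holomorphic form reducing to $\overline{g}$. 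Their difference is purely Eisenstein, $\vartheta_2 f-\vartheta_2' f=-\tfrac16 E_2^{(p)}f$, where $E_2^{(p)}:=E_2-pE_2(pz)$ is the $p$-stabilized weight-$2$ Eisenstein series, which satisfies $w_p E_2^{(p)}=-E_2^{(p)}$.

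I would then compute the Atkin--Lehner action on these lifts. Applying the Fricke slash and using the transformation law of the quasimodular $E_2$, the non-holomorphic $\tfrac1z$-anomalies of $\Theta$ and of $E_2$ cancel exactly, giving the clean identity $w_p\vartheta_2' f=\varepsilon_f\,\vartheta_2 f$. Thus $w_p$ interchanges $\vartheta_2 f$ and $\vartheta_2' f$ up to the scalar $\varepsilon_f$; the $w_p$-eigenforms in their span are $\vartheta_2 f\pm\vartheta_2' f$ with eigenvalues $\pm\varepsilon_f$, and the eigenvalue $-\varepsilon_f$ occurs precisely on the Eisenstein defect $-\tfrac16 E_2^{(p)}f$. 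The heuristic conclusion is now visible: $\overline{g}=\overline{\vartheta_2' f}=\tfrac12\bigl(\overline{F_+}-\overline{F_-}\bigr)$ with $F_\pm=\vartheta_2 f\pm\vartheta_2' f$, and since $\overline{F_-}=-\tfrac16\overline{E_2}\,\overline{f}\neq0$ (the graded ring of mod-$p$ modular forms is a domain, and $\overline{f}\neq0$), the $(-\varepsilon_f)$-part genuinely contributes to $\overline{g}$; morally $\Theta$ reverses the residual $w_p$-sign, forcing $\varepsilon_g=-\varepsilon_f$.

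The main obstacle is making this last step rigorous, i.e. actually assigning to $\overline{g}$ the $w_p$-eigenvalue $\varepsilon_g$ and matching it against the decomposition above. In weight $2$ this is harmless because $w_p=-U_p$ is $p$-integral, so $\varepsilon_f$ is detected by $\overline{U_p}$ on $\overline{f}$; but in weight $4$ one has $w_p=-\tfrac1p U_p$, which is \emph{not} $p$-integral, and indeed the two integral lifts $g$ and $\vartheta_2' f$ of $\overline{g}$ have $w_p$-images whose reductions differ a priori by $(\varepsilon_g-\varepsilon_f)\overline{g}+\tfrac16\varepsilon_f\,\overline{E_2}\,\overline{f}$. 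Since $a_p(g)\equiv0$, reduction of $U_p$ cannot see $\varepsilon_g$, so the sign is not accessible by any purely formal $q$-expansion manipulation: one must control the Atkin--Lehner involution on weight-$4$ forms modulo $p$, where $X_0(p)$ has bad reduction (the Deligne--Rapoport model, with $w_p$ exchanging the two components of the special fibre). This mod-$p$ analysis — equivalently, that twisting the Steinberg local representation at $p$ by the cyclotomic character reverses the Atkin--Lehner sign — is the technical heart, carried out in~\cite{AB07}, and it is the step I expect to require the most work.
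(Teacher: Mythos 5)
Your setup is sound and in fact reproduces the first half of the argument that the paper transposes to the Drinfeld setting: your identity $w_p\vartheta_2'f=\varepsilon_f\,\vartheta_2 f$ is exactly the classical counterpart of Proposition~\ref{action of A-L on partial}, and your observation that the difference of the two lifts is the $w_p$-antiinvariant Eisenstein defect $-\tfrac16E_2^{(p)}f$ is the right pivot. But you stop at the decisive step and you misidentify what that step is. No appeal to the Deligne--Rapoport model of $X_0(p)$ in characteristic $p$, nor to the behaviour of the Steinberg representation under cyclotomic twist, is needed; the argument of [AB07] (mirrored in \S\ref{Section_Proof_Thm_1} of this paper) is an elementary contradiction via the Serre--Swinnerton-Dyer weight filtration. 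Concretely: assume $\varepsilon_g=\varepsilon_f$. Writing $g-\vartheta_2 f+\tfrac16E_2^{(p)}f=ph$ with $h$ integral of weight $4$ and applying $w_p$, the terms $g-\vartheta_2 f$ pick up the common sign $\varepsilon_f$ (here the weight-$4$ non-integrality of $w_p$ is harmless because one only applies it to the exact identity in characteristic $0$), so subtracting gives $\tfrac13E_2^{(p)}f=ph-\varepsilon_f p\,(h|w_p)$, hence $E_2^{(p)}f\equiv -3\varepsilon_f p\,(h|w_p)\pmod{\mfp}$. One then traces both sides down to level $1$ (the classical analogue of Propositions~\ref{ExistenceofF} and~\ref{exists HH}): $f$ is congruent to a level-one form $F$ of weight $p+1$, $E_2\equiv E_{p+1}\pmod p$, and $p\,(h|w_p)$ is congruent to a level-one form $H$ of weight at most $p+3$. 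Since $w(\overline F)=p+1$ and $\widetilde E_{p-1}$, $\widetilde E_{p+1}$ share no common factor (part (3) of the classical analogue of Theorem~\ref{Drinfeld level 1 Filtration}), the filtration of $\overline{E_{p+1}F}$ is exactly $2p+2$, which exceeds $p+3$; contradiction.

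Two further points. First, your ``morally $\Theta$ reverses the sign'' paragraph is a genuine non sequitur as you half-admit: the decomposition $\overline g=\tfrac12(\overline{F_+}-\overline{F_-})$ with $\overline{F_-}\neq0$ cannot by itself force $\varepsilon_g=-\varepsilon_f$, since mod $\mfp$ the two $w_p$-eigenspaces are allowed to collide --- that is precisely what a congruence between plus and minus eigenforms is. The proof must run as a contradiction from $\varepsilon_g=\varepsilon_f$, not as a direct reading of the sign off the reduction. Second, note that the hypothesis $w(\overline F)=p+1$ is automatic in weight $2$ (the analogue of Corollary~\ref{Drinfeld_Congruence_Cor_1}: there are no nonzero level-one forms of weight $2$, and the filtration is $\equiv p+1\pmod{p-1}$), which is why Theorem~\ref{Ahlgren Congruence} carries no filtration hypothesis while the higher-weight generalizations do. So the verdict is: correct scaffolding, but the heart of the proof --- the trace-to-level-one and filtration comparison --- is missing, and the replacement you gesture at is not the mechanism actually used.
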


Barcau and Pa\c{s}ol (cf.~\cite[\S4]{BP11}) proved that Theorem~\ref{Ahlgren Congruence} continues to hold for level $pN$ with $p\nmid N$,
under an assumption on the weight filtration of $f$. 

\begin{thm}
\label{BP Congruence}
Let $p\geq 5$ be a prime and $N> 4$ be an integer such that $p\nmid N$, and $\p$ be the maximal ideal of $\overline{\Z}_p$. 
Let $f\in S_2(\Gamma_0(pN), \overline{\Z}_p)$ and $g\in S_4(\Gamma_0(pN), \overline{\Z}_p)$ 
be two newforms and satisfy $\Theta f \equiv g \pmod \p$. If $w(f)= p+1$ then the eigenvalues of $w_p^{(pN)}$ for $f$ and $g$ have opposite signs.
\end{thm}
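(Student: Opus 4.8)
The plan is to adapt the argument behind Theorem~\ref{Ahlgren Congruence} to the tame level $N$, the two genuinely new ingredients being the auxiliary level and the filtration hypothesis $w(f)=p+1$. First I would record the local input at $p$. Since $f$ and $g$ are newforms of level $pN$ with $p\nmid N$, they are new at $p$, so by Atkin--Lehner theory $w_p^{(pN)}f=\epsilon(f)f$ and $w_p^{(pN)}g=\epsilon(g)g$ with $\epsilon(f),\epsilon(g)\in\{\pm1\}$, and the eigenvalues satisfy $a_p(f)=-\epsilon(f)$ and $a_p(g)=-\epsilon(g)p$, the general relation being $a_p=-\epsilon\,p^{k/2-1}$. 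The goal is exactly $\epsilon(f)\epsilon(g)=-1$. Note that $a_p(g)=\mp p\equiv0\pmod{\p}$, so the sign of $\epsilon(g)$ is invisible on the $q^p$-coefficient; the relation $U_p\Theta=p\,\Theta U_p$ shows likewise that comparing $U_p$-eigenvalues modulo $\p$ is vacuous. Hence the sign must be extracted from the full interaction of $\Theta$ with the Fricke involution rather than from a single coefficient.

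Second I would establish the key transformation law. A direct computation with the Fricke substitution $z\mapsto -1/(pNz)$ gives, in weight $k$,
\[
\Theta\big(f|_k w_p^{(pN)}\big)=\big(\Theta f\big)|_{k+2}\,w_p^{(pN)}-\frac{k}{2\pi i z}\,\big(f|_k w_p^{(pN)}\big),
\]
the last term being the usual $E_2$--defect measuring the non-modularity of $\Theta$ in characteristic $0$. Taking $k=2$ and using $f|_2 w_p^{(pN)}=\epsilon(f)f$ turns this into $\epsilon(f)\,\Theta f=(\Theta f)|_4 w_p^{(pN)}-\frac{\epsilon(f)}{\pi i z}f$. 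I would then reduce modulo $\p$, where $\Theta$ becomes Katz's intrinsic operator raising weight by $p+1$ and where the defect is governed by the congruence $E_2\equiv E_{p+1}\pmod p$, so that the correction term reappears as an honest mod $p$ form. The point to stress is that \emph{dropping} the correction term would give $\epsilon(f)\Theta f\equiv(\Theta f)|_4 w_p^{(pN)}\equiv\epsilon(g)\,\Theta f$, i.e. \emph{equal} signs; so the entire content of the theorem is that the $E_2$--defect, correctly interpreted mod $p$, contributes the extra factor $-1$.

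Third, and this is where I expect the real difficulty, I would use the hypothesis $w(f)=p+1$ to reconcile the characteristic-$0$ weights ($2$ for $f$, $4$ for $g$) with the mod $p$ filtrations. Because $p\nmid w(f)$, the operator $\Theta$ raises the filtration by the full $p+1$, so $w(\Theta f)=2(p+1)$ and hence $w(g)=2p+2$; thus $g$, although of classical weight $4$, is \emph{not} in the image of multiplication by the Hasse invariant $E_{p-1}$, and this is essential: since $E_{p-1}|w_p^{(pN)}\equiv p^{(p-1)/2}E_{p-1}(pz)\equiv0\pmod p$, the Fricke involution does not commute with Hasse multiplication, so one cannot naively transport $w_p^{(pN)}$ between weights $4$ and $p+3$. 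The filtration being maximal ($p+1$ rather than the degenerate value $2$) is precisely what places $\Theta f$ and $g$ in the part of the theory on which the geometric $w_p^{(pN)}$ acts invertibly, and it is what lets the defect term be identified modulo $\p$ with an eigenform of Fricke eigenvalue $-\epsilon(g)$. The main obstacle is thus the bookkeeping of weight versus filtration under $w_p^{(pN)}$, and checking that the surviving defect term carries eigenvalue $-\epsilon(g)$ rather than $+\epsilon(g)$.

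Finally, feeding this back into the identity of the second step yields $\epsilon(f)\,g\equiv-\epsilon(g)\,g\pmod{\p}$, and since $g\not\equiv0$ this gives $\epsilon(f)=-\epsilon(g)$, the asserted opposite signs. The hypothesis $N>4$ enters only to guarantee that $\Gamma_1(N)$ is torsion-free, so that the moduli interpretation and the $q$-expansion principle used to define $w(f)$ and to run the geometric $w_p^{(pN)}$ computation are available. As a cross-check I would verify the conclusion representation-theoretically: $\Theta f\equiv g$ means $\bar\rho_g\cong\bar\rho_f\otimes\bar\chi_p$ with $\bar\chi_p$ the mod $p$ cyclotomic character, and comparing the local representations at $p$ of $f$ and $g$ (special, i.e. Steinberg up to twist) reproduces the sign flip, giving independent confirmation that $\epsilon(f)\epsilon(g)=-1$.
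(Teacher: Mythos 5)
You should first be aware that the paper does not prove Theorem~\ref{BP Congruence} at all: it is quoted from Barcau--Pa\c{s}ol, and what the paper proves are its Drinfeld analogues (Theorems~\ref{Drinfeld_Congruence_Thm_1_Text} and~\ref{Drinfeld_Congruence_Thm_2_Text}), which transpose the same strategy. Measured against that strategy, your proposal assembles the right ingredients --- the corrected Eisenstein series $E_2^*(z)=E_2(z)-pE_2(pz)$ with $E_2^*|w_p=-E_2^*$ and $E_2^*\equiv E_2\equiv E_{p+1}\pmod p$, and the filtration jump $w(\Theta f)=w(f)+p+1$ when $p\nmid w(f)$ --- but it stops exactly where the proof starts. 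The argument is a proof by contradiction: assuming $\epsilon(f)=\epsilon(g)$, one writes $g-\vartheta f-\tfrac{1}{6}E_2^*f=ph$ for a weight-$4$ form $h$ with $p$-integral coefficients ($\vartheta$ the Serre derivative), applies $w_p^{(pN)}$ using the classical counterpart of Proposition~\ref{action of A-L on partial} to cancel $g-\vartheta f$, and is left with a congruence of the shape $E_2^*f\equiv c\,p\,h|w_p^{(pN)}\pmod{\p}$ with $c$ a unit. Your final step, ``feeding this back yields $\epsilon(f)g\equiv-\epsilon(g)g$,'' is not a deduction from anything you established: the claim that ``the surviving defect term carries eigenvalue $-\epsilon(g)$'' \emph{is} the theorem, and you obtain it only by assertion.

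Two concrete steps are missing. First, $h|_4w_p^{(pN)}$ is not $p$-integral a priori, so you cannot simply ``reduce the transformation law mod $\p$''; one needs the trace identity from level $pN$ down to level $N$ applied to $h$ times a suitable modification of $E_{p-1}$ (the classical counterparts of Propositions~\ref{pm to m} and~\ref{exists HH}) to replace $p\,h|w_p^{(pN)}$ by an honest form $H$ of level $N$ and weight $p+3$. Second, the contradiction is a filtration count: $E_2^*f\equiv E_{p+1}F$ with $F$ of weight $p+1$ and level $N$ congruent to $f$ (the analogue of Proposition~\ref{ExistenceofF}), and the hypothesis $w(f)=p+1$ together with the coprimality of the reductions of $E_{p-1}$ and $E_{p+1}$ (the classical counterpart of Theorem~\ref{Drinfeld level 1 Filtration}(3)) forces $w(E_{p+1}F)=2p+2$, while $w(H)\le p+3<2p+2$. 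That inequality --- not any property of $g$ --- is where $w(f)=p+1$ enters; your reading (``$g$ is not in the image of Hasse multiplication'') is in the right spirit but is never turned into a contradiction. Finally, be cautious with the representation-theoretic ``cross-check'': locally at $p$ both $\bar\rho_g$ and $\bar\rho_f\otimes\bar\chi_p$ are extensions of $\bar\lambda\bar\chi_p$ by $\bar\lambda\bar\chi_p^2$ with $\bar\lambda(\mathrm{Frob}_p)$ the unit part of $a_p$, and the naive comparison of unramified twists points toward \emph{equal} signs; making that heuristic honest requires precisely the integral and filtration information the analytic proof supplies, so it cannot serve as independent confirmation.
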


Our main interest lies in studying the conjectures of Calegari and Stein for Drinfeld modular forms and various connections between them.
The present article is a modest first step in this direction where we generalize the
results of~\cite{AB07} and~\cite{BP11} to Drinfeld modular forms of any weight, any type.

 \subsection{Main Results:}
Let $p$ be an odd prime number and $q=p^r$ for some $r \in \N$. 
Suppose $\F_q$ denote the finite field of order $q$. Set $A :=\F_q[T]$ 
and
$K:=\F_q(T)$.
Let $K_\infty=\F_q((\frac{1}{T}))$ be the completion of $K$ 
with respect to the infinite place $\infty$ (corresponding to $\frac{1}{T}$-adic valuation), and denote by $C$ the completion of an algebraic closure of $K_{\infty}$.

Throughout the article, $\p$ denotes a prime ideal of $A$ generated by a monic irreducible polynomial $\pi :=\pi(T)\in A$	of degree $d$ and $\mfm$ denotes an ideal of $A$ generated by a monic polynomial $m:=m(T)\in A$ such that $(\mfp,\mfm)=1$ (i.e., $\pi \nmid m$).


For an ideal $\mfn$ of $A$, we define
$$\Gamma_0(\mfn) :=\big\{\psmat{a}{b}{c}{d}\in \mathrm{GL}_2(A) : c\in \mfn \big\}$$
to be  a congruence subgroup of $\GL_2(A)$.
Let $M_{k,l}(\Gamma_0(\mfn))$ (resp., $M_{k,l}^1(\Gamma_0(\mfn))$) denote the space of 
Drinfeld modular (resp., cusp) forms of weight $k$, type $l$ for $\Gamma_0(\mfn)$.  
Our first result is the following:
\begin{thm}[Theorem~\ref{Drinfeld_Congruence_Thm_1_Text} in the text]
\label{Drinfeld_Congruence_Thm_1}
Suppose that $f\in M_{k,l}^1(\Gamma_0(\mfp))$ and $g\in M_{k+2,l+1}^1(\Gamma_0(\mfp))$ have 
$\p$-integral $u$-series expansions at $\infty$ with $\Theta f \equiv g \pmod {{\p}}$. 
Assume that $w(\overline{F})= (k-1)(q^d-1)+k$ where $F$ is as in Proposition~\ref{ExistenceofF} corresponding to $kf$,
and $(k,p)=1$.
If $f|W_\mfp = \alpha f$ and $g|W_\mfp= \beta g$
with $\alpha,\beta \in \{ \pm 1 \}$, then $\beta = -\alpha$.
\end{thm}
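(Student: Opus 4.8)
The plan is to follow the template of Ahlgren--Barcau~\cite{AB07} and Barcau--Pa\c{s}ol~\cite{BP11}: reduce the assertion to a single commutation relation between $\Theta$ and the Atkin--Lehner involution $W_\mfp$ modulo $\mfp$, and then read off the signs from the two eigen-equations. The starting point is the decomposition $\Theta = \partial_k + k\,\mathcal{E}$, where $\partial_k \colon M_{k,l}^1(\Gamma_0(\mfp)) \to M_{k+2,l+1}^1(\Gamma_0(\mfp))$ is the genuine (Serre-type) derivative and $\mathcal{E}$ is Gekeler's weight-$2$, type-$1$ false Eisenstein series, which is only quasi-modular. Using the chain rule for the weight-$(k,l)$ slash action together with the quasi-modular transformation of $\mathcal{E}$ under $W_\mfp$ (whose defect comes from the lower-left entry $\pi$ of the Atkin--Lehner matrix), I would first verify the exact identity $(\Theta f)|W_\mfp = \alpha\,\Theta f - k\alpha(\tilde{\pi} z)^{-1} f$ whenever $f|W_\mfp = \alpha f$; equivalently, $\partial_k$ commutes with $W_\mfp$ and preserves its eigenvalue, while $\Theta$ picks up the non-modular defect $-k\alpha(\tilde{\pi} z)^{-1} f$. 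Here the hypothesis $(k,p)=1$ is essential: it guarantees that the coefficient $k$ of this defect is a unit modulo $\mfp$, so the defect genuinely survives and is precisely what will reverse the sign.

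Next I would pass to full level to handle the defect modulo $\mfp$. Reducing mod $\mfp$, the false Eisenstein series becomes a genuine Drinfeld modular form, $\mathcal{E} \equiv g_{q^d+1}$ of weight $q^d+1$ (the analogue of $E_2 \equiv E_{p+1}$), so that $g \equiv \Theta f \equiv \partial_k f + k\,g_{q^d+1} f$ is a true mod-$\mfp$ form of weight $k + q^d + 1$. Invoking Proposition~\ref{ExistenceofF} for $kf$, I obtain the full-level form $F$ with $\overline{F}$ of weight filtration $w(\overline{F}) = (k-1)(q^d-1)+k$; for $k=2$ this value is exactly $q^d+1$, so the hypothesis is the precise analogue of the maximality condition $w(f)=p+1$ used in~\cite{BP11}. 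The non-modular defect $(\tilde{\pi} z)^{-1} f$ is then recognised, after reduction, as the discrepancy between the weight-$(k+2)$ and weight-$(k+q^d+1)$ normalisations of $W_\mfp$, that is, as the $V_\mfp$-shift of the level-one form $\overline{F}$; the maximality of $w(\overline{F})$ forces this shift to \emph{reverse} the $W_\mfp$-eigenvalue rather than preserve it, yielding the mod-$\mfp$ anti-commutation $(\Theta f)|W_\mfp \equiv -\,\Theta(f|W_\mfp) \pmod{\mfp}$.

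Granting this anti-commutation, the theorem follows at once. Substituting $f|W_\mfp = \alpha f$ gives $\Theta(f|W_\mfp) = \alpha\,\Theta f \equiv \alpha g$, while the left-hand side is $(\Theta f)|W_\mfp \equiv g|W_\mfp = \beta g$; comparing the two and using $g \not\equiv 0 \pmod{\mfp}$ (which follows from the filtration hypothesis on $\overline{F}$) yields $\beta g \equiv -\alpha g$, hence $\beta = -\alpha$.

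The main obstacle is the second paragraph: proving that the reduced defect produces exactly the sign $-1$, and not merely some scalar. This requires Gekeler's theory of the weight filtration of mod-$\mfp$ Drinfeld modular forms, in particular the precise jump of $\Theta$ by $q^d+1$ on filtrations, together with an analysis of how $W_\mfp$ interchanges the two cusps of $X_0(\mfp)$ and hence exchanges the two $V_\mfp$-images of $\overline{F}$. The hypotheses $w(\overline{F}) = (k-1)(q^d-1)+k$ and $(k,p)=1$ are exactly the inputs that should make this sign deterministic, and checking that they suffice---and that the reduction $\mathcal{E} \equiv g_{q^d+1}$ and its defect behave as claimed---is where the real work will lie.
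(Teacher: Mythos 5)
Your outline---reducing the sign statement to a mod-$\mfp$ (anti-)commutation relation between $\Theta$ and $W_\mfp$---is a reasonable guess at the shape of the argument, but as written it has two genuine gaps, and the second is fatal to the final deduction. First, the central claim $(\Theta f)|W_\mfp \equiv -\,\Theta(f|W_\mfp) \pmod{\mfp}$ is never established; you concede this yourself, and the heuristic offered (the defect being ``the $V_\mfp$-shift of $\overline{F}$'' whose sign is governed by the maximality of $w(\overline{F})$) does not amount to an argument. In fact the exact computation used inside the proof of Proposition~\ref{action of A-L on partial} gives $(\Theta f)|W_\mfp = \alpha\,\Theta f + k\alpha f/(\tilde{\pi}(z+d))$, a defect which is a rational function of $z$ with no $u$-expansion at $\infty$; the asserted anti-commutation would force $k\alpha f/(\tilde{\pi}(z+d)) \equiv -2\alpha\,\Theta f$, which is not even well posed as a congruence of $u$-expansions. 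Second, your concluding step applies $W_\mfp$ to the congruence $\Theta f \equiv g \pmod{\mfp}$ to obtain $(\Theta f)|W_\mfp \equiv g|W_\mfp$. Congruences here are congruences of $u$-expansions at the cusp $\infty$, and $W_\mfp$ interchanges the cusps $0$ and $\infty$, so it does not preserve such congruences. This is exactly the difficulty the paper's proof is designed to avoid.

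The paper instead argues by contradiction assuming $\beta=\alpha$: it upgrades the congruence to the exact identity $g - \partial f + kE^*f = \pi h$ with $v_\mfp(h)\ge 0$, applies $W_\mfp$ to that identity (Proposition~\ref{action of A-L on partial} together with $E^*|_{2,1}W_\mfp=-E^*$ yields $\alpha(g-\partial f)=\pi h|W_\mfp$), deduces $kE^*f \equiv -\alpha\pi h|W_\mfp \pmod{\mfp}$, and then transports both sides to level one via the trace operator (Propositions~\ref{ExistenceofF} and~\ref{exists HH}, whose valuation estimates are the technical heart). This produces level-one forms $F$ and $H$ with $H \equiv \partial(g_d)F \pmod{\mfp}$, and the contradiction is the quantitative filtration comparison $w(\overline{H}) \le (k-1)q^d+3 < kq^d+2 = w(\overline{\partial(g_d)F})$, where the right-hand equality uses the isobaric-polynomial criteria of Theorem~\ref{Drinfeld level 1 Filtration} and is precisely where the hypothesis $w(\overline{F})=(k-1)(q^d-1)+k$ and the coprimality of $\overline{A_d}$ and $\overline{B_d}$ enter. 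Your proposal never reaches this comparison. (A smaller inaccuracy: $g\not\equiv 0 \pmod{\mfp}$ does not follow from the filtration hypothesis on $\overline{F}$, since $\Theta$ can annihilate a nonzero form mod $\mfp$; the paper's argument does not need this claim.)
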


The Ramanujan's $\Theta$-operator, the weight filtration $w(\overline{F})$ of $F$, and 
the Atkin-Lehner involution $W_\p$ are introduced
in \S\ref{Theta operator}, \S\ref{Level1Filtration}, and \S\ref{AL involution} respectively.
In Proposition~\ref{ExistenceofF}, we establish that for any $f\in M_{k,l}^1(\Gamma_0(\mfp\mfm))$,  there exists a Drinfeld modular
form $F \in M_{(k-1)(q^d-1)+k,l}^1(\Gamma_0(\mfm)) $ such that $F \equiv f \pmod \mfp$. 

In Theorem~\ref{Drinfeld_Congruence_Thm_1}, the condition  $w(\overline{F})= (k-1)(q^d-1)+k$ is automatically satisfied for Drinfeld modular forms of weight $2$, type $1$. More precisely, we prove
\begin{cor}
\label{Drinfeld_Congruence_Cor_1}
Suppose that $f\in M_{2,1}^1(\Gamma_0(\mfp))$ and $g\in M_{4,2}^1(\Gamma_0(\mfp))$ have 
$\p$-integral $u$-series expansions at $\infty$ with $\Theta f \equiv g \pmod {{\p}}$. 
Assume that $f \not \equiv 0\pmod \p$. If $f|W_\mfp = \alpha f$ and $g|W_\mfp= \beta g$
with $\alpha,\beta \in \{ \pm 1 \}$, then $\beta = -\alpha$.
\end{cor}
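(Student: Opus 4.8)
The plan is to obtain the corollary directly from Theorem~\ref{Drinfeld_Congruence_Thm_1} by verifying that its two hypotheses hold automatically when $k=2$ and $l=1$. The condition $(k,p)=(2,p)=1$ is immediate since $p$ is odd, so the entire content is the filtration hypothesis: I must show $w(\overline{F})=(k-1)(q^d-1)+k=q^d+1$. Here $F$ is the form of weight $q^d+1$ and type $1$ at full level $\GL_2(A)$ attached to $2f$ by Proposition~\ref{ExistenceofF} (with $\mfm=(1)$), so that $\overline{F}=2\overline{f}$; since $f\not\equiv 0\pmod\p$ and $p$ is odd, $\overline{F}\neq 0$.

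To pin down $w(\overline{F})$ I would confine it to a very short list. On one hand $w(\overline{F})\le q^d+1$, because $F$ itself is a form of weight $q^d+1$ reducing to $\overline{F}$. On the other hand, the weight filtration of a nonzero mod-$\p$ Drinfeld modular form of full level is congruent modulo $q^d-1$ to every weight realizing that reduction; this is the Drinfeld analogue of the classical filtration theory and rests on the existence of a Hasse-type invariant of weight $q^d-1$ that is $\equiv 1\pmod\p$, the same invariant responsible for the weight jump in Proposition~\ref{ExistenceofF}. Hence $w(\overline{F})\equiv q^d+1\equiv 2\pmod{q^d-1}$, and together with $0<w(\overline{F})\le q^d+1$ this forces $w(\overline{F})\in\{2,\,q^d+1\}$.

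It then remains to exclude $w(\overline{F})=2$. If it held, $\overline{F}$ would be the reduction of a nonzero level-$1$ form of weight $2$ and type $1$, i.e. $M_{2,1}(\GL_2(A))\neq 0$. But the graded ring of level-$1$ Drinfeld modular forms is generated by the forms of weight $q-1$ (type $0$) and weight $q+1$ (type $1$): when $q>3$ one has $q-1>2$ and there are no nonzero forms of weight $2$ at all, while when $q=3$ the weight-$2$ space is spanned by the type-$0$ generator and so contains no nonzero form of type $1$. In every case $M_{2,1}(\GL_2(A))=0$, whence $w(\overline{F})\neq 2$ and therefore $w(\overline{F})=q^d+1$. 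Substituting this into Theorem~\ref{Drinfeld_Congruence_Thm_1} gives $\beta=-\alpha$.

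The main obstacle is the filtration step of the second paragraph: making precise that two full-level forms with the same nonzero reduction modulo $\p$ have weights congruent modulo $q^d-1$, and that the type is preserved under reduction and under passage to the filtration-realizing form. This is exactly where the Drinfeld-theoretic input enters—the weight-$(q^d-1)$ Hasse invariant and the description of mod-$\p$ forms as a ring graded by weight modulo $q^d-1$. Once that is in hand, the upper bound $w(\overline{F})\le q^d+1$ and the vanishing $M_{2,1}(\GL_2(A))=0$ are routine, and the corollary follows.
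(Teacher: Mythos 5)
Your proposal is correct and follows essentially the same route as the paper: both use the congruence $w(\overline{F})\equiv q^d+1\equiv 2\pmod{q^d-1}$ together with the bound $w(\overline{F})\le q^d+1$ to reduce to the two values $2$ and $q^d+1$, rule out weight $2$ via the structure $M(\GL_2(A))=C[g_1,h]$ (treating $q>3$ and $q=3$ separately), and then invoke Theorem~\ref{Drinfeld_Congruence_Thm_1}. The only cosmetic difference is that the paper additionally notes $M^1_{2,0}(\GL_2(A))=\{0\}$ to cover type-$0$ candidates directly, whereas you appeal to preservation of type under reduction; both resolutions are fine.
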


Similar to the classical case, Theorem~\ref{Drinfeld_Congruence_Thm_1} can be extended to the level $\mfp \mfm$, which is described in the following theorem.

\begin{thm}[Theorem~\ref{Drinfeld_Congruence_Thm_2_Text} in the text]
\label{Drinfeld_Congruence_Thm_2}
Let $\mfm$ be an ideal of $A$ generated by a polynomial in $A$ 
which has a prime factor of degree prime to $q-1$ and $\p \nmid \m$.
Suppose that $f\in M_{k,l}^1(\Gamma_0(\mfp\mfm))$ and $g\in M_{k+2,l+1}^1(\Gamma_0(\mfp\mfm))$ have $\p$-integral $u$-series expansions at $\infty$
with $\Theta f \equiv g \pmod {\p}$. Assume that $w(\overline{F})= (k-1)(q^d-1)+k$, where $F$ is as in Proposition~\ref{ExistenceofF} corresponding to $kf$,
and $(k,p)=1$.
If $f|W_\mfp^{(\mfp\mfm)} = \alpha f$ and $g|W_\mfp^{(\mfp\mfm)}= \beta g$ with $\alpha, \beta \in \{ \pm 1 \}$, then $\beta = -\alpha$.
\end{thm}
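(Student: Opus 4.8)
The plan is to run the argument behind Theorem~\ref{Drinfeld_Congruence_Thm_1} at the composite level $\mathfrak{p}\mathfrak{m}$, the one new feature being that the auxiliary form now lives at level $\mathfrak{m}$ rather than at level $1$. First I would apply Proposition~\ref{ExistenceofF} to $kf$ to obtain $F \in M_{(k-1)(q^d-1)+k,\,l}^1(\Gamma_0(\mathfrak{m}))$ with $F \equiv kf \pmod{\mathfrak{p}}$; since $(k,p)=1$ this lift faithfully records $f$ modulo $\mathfrak{p}$, and the hypothesis $w(\overline{F}) = (k-1)(q^d-1)+k$ says that $F$ has the maximal possible weight filtration, so that applying the $\Theta$-operator to $\overline{F}$ produces no drop in filtration and, in particular, $\Theta\overline{F}\neq 0$. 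Because $\Theta\overline{F}=k\,\Theta\overline{f}\equiv k\,\overline{g}$ and $(k,p)=1$, this already forces $\overline{g}\not\equiv 0$. This is exactly the role played by the condition $w(f)=p+1$ in Theorem~\ref{BP Congruence}: it keeps the reduced congruence $\overline{g}\equiv\Theta\overline{f}$ from degenerating.

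The heart of the matter is the interaction of $\Theta$ with the Atkin-Lehner involution $W_\mathfrak{p}^{(\mathfrak{p}\mathfrak{m})}$ modulo $\mathfrak{p}$. Working throughout with $u$-expansions at $\infty$, I would establish a twisting identity of the shape
\[
\Theta\!\left(f\,\big|\,W_\mathfrak{p}^{(\mathfrak{p}\mathfrak{m})}\right) \;\equiv\; -\left(\Theta f\right)\big|\,W_\mathfrak{p}^{(\mathfrak{p}\mathfrak{m})} \pmod{\mathfrak{p}},
\]
the sign (and, more carefully, the explicit $\mathfrak{p}$-unit constant that reduces to $-1$) being the Drinfeld analogue of the classical relation governing how $\Theta$ intertwines with the Fricke involution. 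Granting this, $f\,|\,W_\mathfrak{p}^{(\mathfrak{p}\mathfrak{m})} = \alpha f$ together with linearity of $\Theta$ gives $\alpha\,\Theta f \equiv -(\Theta f)\,|\,W_\mathfrak{p}^{(\mathfrak{p}\mathfrak{m})}\pmod{\mathfrak{p}}$, and then the congruence $\Theta f \equiv g \pmod{\mathfrak{p}}$ with $g\,|\,W_\mathfrak{p}^{(\mathfrak{p}\mathfrak{m})} = \beta g$ turns the right-hand side into $-\beta g$. Hence $\alpha\, g \equiv -\beta\, g \pmod{\mathfrak{p}}$, and since $\overline{g}\not\equiv 0$ from the first paragraph, cancelling $g$ yields $\alpha = -\beta$, i.e. $\beta=-\alpha$, as $\alpha,\beta\in\{\pm1\}$.

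The step I expect to be the main obstacle, and the only place where the extra hypothesis on $\mathfrak{m}$ enters, is justifying that the level-$1$ machinery behind Theorem~\ref{Drinfeld_Congruence_Thm_1}---the lift $F$ with controllable filtration and the clean $u$-expansion formula for $W_\mathfrak{p}^{(\mathfrak{p}\mathfrak{m})}$ modulo $\mathfrak{p}$---survives at level $\mathfrak{m}$. Concretely, I would use the assumption that $\mathfrak{m}$ has a prime factor $\mathfrak{q}$ of degree prime to $q-1$ to produce an auxiliary form at level $\mathfrak{m}$ with known behaviour under the Atkin-Lehner involutions at the primes dividing $\mathfrak{m}$, the degree condition being what secures the weight-type compatibility modulo $q-1$ so that such a form of the correct type exists. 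This decouples the $\mathfrak{m}$-part of $W_\mathfrak{p}^{(\mathfrak{p}\mathfrak{m})}$ from the $\mathfrak{p}$-part and reduces the sign computation to the level-$\mathfrak{p}$ identity already established for Theorem~\ref{Drinfeld_Congruence_Thm_1}. Pinning down the precise unit constant in the twisting identity, and checking that the filtration of $\Theta\overline{F}$ is unaffected by passing from level $1$ to level $\mathfrak{m}$, are the routine-but-delicate verifications I would relegate to the body of the proof.
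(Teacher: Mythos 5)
Your central step is the claimed twisting identity $\Theta(f|W_\p^{(\p\m)}) \equiv -(\Theta f)|W_\p^{(\p\m)} \pmod{\p}$, and this is where the argument breaks. No such unconditional sign rule can hold: the correct relation (Proposition~\ref{action of A-L on partial} in the paper) is $(\partial_k f)|_{k+2,l+1}W_\p^{(\p\m)} = \alpha(\partial_k f - kE^*f)$ when $f|W_\p^{(\p\m)}=\alpha f$, so the failure of your naive sign flip is measured exactly by the correction term $kE^*f$, and the entire content of the theorem is to show that this term cannot be killed mod $\p$ unless the weight filtration of $F$ drops. If your identity were true, your second paragraph would prove $\beta=-\alpha$ using only $\overline{g}\not\equiv 0$ and no filtration hypothesis at all --- but \S\ref{Section_Counter_Example} exhibits explicit counterexamples (e.g.\ built from $\Delta$, for which $\Theta\Delta\equiv\Delta E\pmod\p$ and $\overline{\Theta\Delta}=-u^q+\cdots\neq 0$) where $f$ and $g$ are $W_\p^{(\p\m)}$-eigenforms with the \emph{same} sign. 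A further unjustified step in the same paragraph is passing from $\Theta f\equiv g\pmod\p$ to $(\Theta f)|W_\p^{(\p\m)}\equiv g|W_\p^{(\p\m)}\pmod\p$: slashing by $W_\p^{(\p\m)}$ moves the expansion to the other cusp and does not preserve congruences of $u$-expansions at $\infty$; circumventing precisely this is why the paper introduces $g_{(k)}$ and the trace operator $\Tr_\m^{\p\m}$ in Propositions~\ref{ExistenceofF} and~\ref{exists HH}.

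The paper's actual route is quite different: assuming $\beta=\alpha$, it writes $g-\partial f+kE^*f=\pi h$, applies $W_\p^{(\p\m)}$ to deduce $kE^*f\equiv-\alpha\pi h|W_\p^{(\p\m)}\pmod\p$, then uses the trace-down constructions to replace $kf$ by $F$ at level $\m$ and $\alpha\pi h|W_\p^{(\p\m)}$ by $H\in M^1_{(k-1)q^d+3,l+1}(\Gamma_0(\m))$, arriving at $H\equiv\partial(g_d)F\pmod\p$. The hypothesis $w(\overline{F})=(k-1)(q^d-1)+k$ then forces $w(\overline{\partial(g_d)F})=kq^d+2$, which exceeds the weight $(k-1)q^d+3$ of $H$ --- a contradiction. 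The condition on $\m$ is not used to decouple Atkin--Lehner involutions at primes dividing $\m$ as you suggest; it is needed so that a complement $\Delta\subseteq(A/\m)^\times$ of $\F_q^\times$ exists and Hattori's theory of the curve $X_1^\Delta(\m)$ applies, giving the supersingular-point criterion (Proposition~\ref{Lower Filtration}) that controls the weight filtration at level $\m$. Your opening claim that maximal filtration of $\overline{F}$ implies $\Theta\overline{F}\neq 0$ is also not something the paper establishes, and is delicate for Drinfeld forms (note $\partial\Delta=0$).
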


The (partial) Atkin-Lehner involution $W_\p^{(\p\m)}$ and the weight filtration $w(\overline{F})$ of $F$
are introduced in \S\ref{AL involution} and \S\ref{Level m filtration}, respectively.
We note that in proving Theorem~\ref{Drinfeld_Congruence_Thm_2}, we will make use of the recent work of Hattori~\cite{Hat20}
for which the conditions on $\m$ are necessary.  


There is a significant difference in our approach to prove Theorem~\ref{Drinfeld_Congruence_Thm_1} and Theorem~\ref{Drinfeld_Congruence_Thm_2}.
 We use the structure of Drinfeld modular 
forms for $\GL_2(A)$ in the proof of Theorem~\ref{Drinfeld_Congruence_Thm_1} (cf. \S \ref{Section_Proof_Thm_1}).
We appeal to  the geometry of modular curves and use the recent work of Hattori (cf.~\cite{Hat20}) to prove Theorem~\ref{Drinfeld_Congruence_Thm_2}
(cf. \S \ref{Section_Proof_Thm_2}).

\subsection{Results for $\p$-new forms:}
The space of $\p$-new forms $M_{k,l}^{1,\p-\new}(\Gamma_0(\p \m))$  for level $\p\m$ was introduced 
by Bandini and Valentino (cf.~\cite[Definition 2.14]{BV20}). 
Now, we state Theorem~\ref{Drinfeld_Congruence_Thm_1} and Theorem~\ref{Drinfeld_Congruence_Thm_2} for $\p$-new forms.
They  are natural generalizations of the results of~\cite{AB07} and~\cite{BP11}.

If $f \in M_{k,l}^{1,\p-\new}(\Gamma_0(\p \m))$, then the relation 
$f| W_\p^{(\p\m)} = - \pi^{1-k/2}(f| U_\p)$ (cf. \cite[Theorem 2.16]{BV20})
implies that $f$ is an eigenvector for the $W_\p^{(\p\m)}$-operator if and only if it is an eigenvector for the $U_\p$-operator. 
Note that the normalization here differs from that of~\cite{BV20}. 
For such a Drinfeld modular form $f$, the eigenvalues of $f$ are $\pm \pi^{k/2-1}$ (resp., $\mp 1$)
with respect to the $U_\p$-operator (resp., the $W_\p^{(\p\m)}$-operator).
In fact, the above relation also implies that the eigenvalues of $f$ with respect 
to the $U_\p$-operator and the $W_\p^{(\p\m)}$-operator have opposite signs.

Now, we rephrase our main results in terms of the $U_\p$-operator.
\begin{cor} 
\label{Drinfeld_Congruence_Thm_3}
Let $\m \subseteq A$ be ideal such that either $\m=(1)$ or as in Theorem~\ref{Drinfeld_Congruence_Thm_2}.
Suppose $f\in M_{k,l}^{1,\p-\new}(\Gamma_0(\p\m))$ and $g\in M_{k+2,l+1}^{1,\p-\new}(\Gamma_0(\mfp\m))$
are two Drinfeld modular forms satisfying the hypothesis of Theorem~\ref{Drinfeld_Congruence_Thm_2}. 
If $f$ and $g$ are eigenforms for the $U_{\p}$-operator, then the eigenvalues of 
the $W^{(\p\m)}_{\p}$-operator on $f$ and $g$ have opposite signs.  
\end{cor}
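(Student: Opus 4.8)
The plan is to reduce the corollary directly to Theorem~\ref{Drinfeld_Congruence_Thm_2} (or to Theorem~\ref{Drinfeld_Congruence_Thm_1} when $\m=(1)$), by converting the $U_\p$-eigenform hypothesis into the $W_\p^{(\p\m)}$-eigenform hypothesis required there. The bridge is the relation $f|W_\p^{(\p\m)} = -\pi^{1-k/2}(f|U_\p)$ for $\p$-new forms recorded in the preceding discussion (from \cite[Theorem 2.16]{BV20}).

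First I would note that since $f\in M_{k,l}^{1,\p-\new}(\Gamma_0(\p\m))$ satisfies $f|U_\p=\lambda_f f$, the displayed relation gives $f|W_\p^{(\p\m)} = -\pi^{1-k/2}\lambda_f\, f$, so $f$ is automatically an eigenform for $W_\p^{(\p\m)}$ with eigenvalue $\alpha := -\pi^{1-k/2}\lambda_f$. Since $W_\p^{(\p\m)}$ restricts to an involution on the $\p$-new space, $\alpha\in\{\pm 1\}$, as already observed before the statement. The identical argument applied to $g\in M_{k+2,l+1}^{1,\p-\new}(\Gamma_0(\p\m))$, now with weight $k+2$ so that the normalizing factor is $\pi^{-k/2}$, produces an eigenvalue $\beta\in\{\pm 1\}$ with $g|W_\p^{(\p\m)}=\beta g$.

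With $\alpha,\beta\in\{\pm 1\}$ now in hand, the pair $(f,g)$ satisfies precisely the hypotheses of Theorem~\ref{Drinfeld_Congruence_Thm_2}: the congruence $\Theta f\equiv g\pmod\p$, the filtration condition $w(\overline{F})=(k-1)(q^d-1)+k$, and $(k,p)=1$ are all inherited from the standing assumption that $f,g$ satisfy the hypotheses of that theorem. Invoking Theorem~\ref{Drinfeld_Congruence_Thm_2} gives $\beta=-\alpha$, i.e. the $W_\p^{(\p\m)}$-eigenvalues of $f$ and $g$ have opposite signs; in the degenerate case $\m=(1)$ one appeals instead to Theorem~\ref{Drinfeld_Congruence_Thm_1}, where $W_\p^{(\p\m)}$ coincides with $W_\p$.

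I do not anticipate a substantive obstacle, since the corollary is essentially a translation of the main theorem through the $\p$-new relation; the only point deserving care is verifying that the $U_\p$-eigenform hypothesis forces the $W_\p^{(\p\m)}$-eigenvalue into $\{\pm 1\}$, rather than merely making $f$ an eigenform with an arbitrary scalar. This is exactly what the involutivity of $W_\p^{(\p\m)}$ on the $\p$-new space, combined with the explicit relation above, guarantees.
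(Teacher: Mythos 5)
Your proposal is correct and matches the paper's approach exactly: the paper treats this corollary as an immediate consequence of the relation $f|W_\p^{(\p\m)} = -\pi^{1-k/2}(f|U_\p)$ from \cite[Theorem 2.16]{BV20} (which forces the $W_\p^{(\p\m)}$-eigenvalues into $\{\pm 1\}$), followed by an application of Theorem~\ref{Drinfeld_Congruence_Thm_2} (or Theorem~\ref{Drinfeld_Congruence_Thm_1} when $\m=(1)$).
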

For $\m=(1)$, we get:
\begin{cor}
\label{Drinfeld_Congruence_Cor_3}
Let $f\in M_{2,1}^{1,\p-\new}(\Gamma_0(\p))$ and $g\in M_{4,2}^{1,\p-\new}(\Gamma_0(\mfp))$ 
be two Drinfeld modular forms satisfying the hypothesis of Corollary~\ref{Drinfeld_Congruence_Cor_1}.
If $f$ and $g$ are eigenforms for the $U_{\p}$-operator, then the eigenvalues of 
the $W_{\p}$-operator on $f$ and $g$ have opposite signs.  
\end{cor}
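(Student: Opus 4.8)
The plan is to reduce the statement about $U_\p$-eigenforms to the statement about $W_\p$-eigenforms that is already settled in Corollary~\ref{Drinfeld_Congruence_Cor_1}, exploiting the $\p$-new relation $f|W_\p = -\pi^{1-k/2}(f|U_\p)$ recorded in the discussion above (the $\m=(1)$ case of \cite[Theorem 2.16]{BV20}). The key observation is that, for a $\p$-new form, this relation converts a $U_\p$-eigenvector into a $W_\p$-eigenvector whose eigenvalue is forced to lie in $\{\pm 1\}$, which is precisely the form of hypothesis required to invoke Corollary~\ref{Drinfeld_Congruence_Cor_1}.

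First I would translate the two $U_\p$-eigenvalue conditions into $W_\p$-eigenvalue conditions. Since $f$ is $\p$-new and a $U_\p$-eigenform, the relation forces $f$ to be a $W_\p$-eigenform, and as noted preceding the corollary its $U_\p$-eigenvalue is $\pm\pi^{k/2-1}$ with corresponding $W_\p$-eigenvalue $\mp 1$. For $f\in M_{2,1}^{1,\p-\new}(\Gamma_0(\p))$ we have $k=2$, so $\pi^{1-k/2}=1$ and the $U_\p$-eigenvalue is $\pm 1$, yielding a $W_\p$-eigenvalue $\alpha:=\mp 1\in\{\pm 1\}$. For $g\in M_{4,2}^{1,\p-\new}(\Gamma_0(\p))$ we have $k=4$, so $\pi^{1-k/2}=\pi^{-1}$ and the $U_\p$-eigenvalue is $\pm\pi$, yielding $\beta:=-\pi^{-1}(\pm\pi)=\mp 1\in\{\pm 1\}$. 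Thus $f$ and $g$ are $W_\p$-eigenforms with eigenvalues $\alpha,\beta\in\{\pm 1\}$.

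By hypothesis, $f$ and $g$ already satisfy the assumptions of Corollary~\ref{Drinfeld_Congruence_Cor_1}: they have $\p$-integral $u$-series expansions at $\infty$, $\Theta f\equiv g\pmod\p$, and $f\not\equiv 0\pmod\p$. Having produced $W_\p$-eigenvalues $\alpha,\beta\in\{\pm 1\}$, I would then apply Corollary~\ref{Drinfeld_Congruence_Cor_1} verbatim to conclude $\beta=-\alpha$; that is, the $W_\p$-eigenvalues of $f$ and $g$ have opposite signs, which is the assertion. I do not expect a genuine obstacle here, since all of the analytic structure is already absorbed into Corollary~\ref{Drinfeld_Congruence_Cor_1}; the only point demanding care is the bookkeeping that the normalizing powers $\pi^{1-k/2}$ (for $k=2$ and $k=4$) combine with the $U_\p$-eigenvalues $\pm\pi^{k/2-1}$ so as to land the $W_\p$-eigenvalues precisely in $\{\pm 1\}$, making Corollary~\ref{Drinfeld_Congruence_Cor_1} legitimately applicable.
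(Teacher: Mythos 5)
Your proposal is correct and is essentially the paper's own argument: the paper likewise uses the $\p$-new relation $f|W_\p = -\pi^{1-k/2}(f|U_\p)$ to convert the $U_\p$-eigenform hypothesis into $W_\p$-eigenvalues lying in $\{\pm 1\}$, and then invokes Corollary~\ref{Drinfeld_Congruence_Cor_1}. The bookkeeping with $\pi^{1-k/2}$ for $k=2,4$ is exactly as you describe.
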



It is natural to wonder what would happen if one drops the assumption on $w(\overline{F})$ in Theorem~\ref{Drinfeld_Congruence_Thm_1}
 and Theorem~\ref{Drinfeld_Congruence_Thm_2}. 
In \S\ref{Section_Counter_Example}, we will show that these theorems may not continue to hold
if we drop the assumption $w(\overline{F})= (k-1)(q^d-1)+k$.



Finally, we note that the results of~\cite{AB07},~\cite{BP11} were proved only for smaller weights and 
it is unknown whether similar results hold for higher weights.
However, our results are valid for Drinfeld modular forms of any weight, any type.

\subsection{An overview of the article}
The article is organized as follows. In \S~\ref{Basic theory of Drinfeld modular forms section}, we recall some basic theory of Drinfeld modular forms. 
In \S~\ref{Background material for the proofs of the main results section}, we introduce certain operators, study the inter-relations between them, and
state two important propositions. In \S~\ref{Section_Proof_Thm_1}, we give a proof of Theorem~\ref{Drinfeld_Congruence_Thm_1}. 
In \S~\ref{Section_Proof_Thm_2}, we recall some results from~\cite{Hat20},~\cite{HatJNT20} and use them to prove Theorem~\ref{Drinfeld_Congruence_Thm_2}. In the final section, i.e., in \S~\ref{Section_Counter_Example}, we show that the assumption $w(\overline{F})= (k-1)(q^d-1)+k$ in Theorem~\ref{Drinfeld_Congruence_Thm_1} and Theorem~\ref{Drinfeld_Congruence_Thm_2} is necessary.


\section{Basic theory of Drinfeld modular forms}
\label{Basic theory of Drinfeld modular forms section}

The theory of Drinfeld modular forms was studied extensively by Goss, Gekeler, and various other authors
(cf. ~\cite{G80}, ~\cite{G80a},~\cite{Gek88},~\cite{GR96} for more details).
In this section, we recall certain theory of Drinfeld modular forms which are needed to prove our results.

There is an equivalence of categories between the category of Drinfeld modules of rank $r$ over a complete subfield $M$ of $C$ containing $K_\infty$ and the category of $M$-lattices of rank $r$ (cf. ~\cite[Theorem 4.6.9]{Gos96}).  
Let $L=\tilde{\pi}A \subseteq C$ be the $A$-lattice of rank $1$ 
corresponding to the rank $1$ Drinfeld module (which is also called the Carlitz module) 
\begin{equation}
\label{Carlitz Module}
\rho_T=TX+X^q,
\end{equation}
where $\tilde{\pi}\in K_\infty(\sqrt[q-1]{-T})$ is defined up to a $(q-1)$-th root of unity.

The Drinfeld upper half-plane $\Omega= C-K_\infty$
has a rigid analytic structure. 
The group $\GL_2(K_\infty)$ acts on $\Omega$ via fractional linear transformations.
Any $x\in K_\infty^\times$ has the unique expression 
$x= \zeta_x\big(\frac{1}{T} \big)^{v_\infty(x)}u_x,$
where $\zeta_x\in \F_q^\times$, and $v_\infty(u_x-1)\geq 0$ ($v_\infty$ is the valuation at $\infty$).

\begin{dfn}
Suppose $k \in \N$, $l \in \Z/(q-1)\Z$. Let $f:\Omega \longrightarrow C$ 
be a rigid holomorphic function. For any $\gamma=\psmat{a}{b}{c}{d}\in \GL_2(K_{\infty})$,
the slash operator $|_{k,l} \gamma$ on $f$ is defined by
\begin{equation}
\label{slash operator}
f|_{k,l} \gamma := \zeta_{\det\gamma}^l\Big(\frac{\det \gamma}{\zeta_{\det(\gamma)}} \Big)^{k/2}(cz+d)^{-k}f(\gamma z).
\end{equation}
\end{dfn}
Note that the slash operator has the following property.
For $i=1,2$, let $k_i \in \N$, $l_i \in \Z/(q-1)\Z$
and $f_i$ be a rigid holomorphic function on $\Omega$.
For $\gamma \in \GL_2(K_{\infty})$,  by~\eqref{slash operator}, we have 
\begin{equation}
\label{Action splits}
\begin{aligned}
f_1|_{k_1,l_1} \gamma \cdot f_2|_{k_2,l_2} \gamma &= \zeta_{\det\gamma}^{l_1}\big(\frac{\det \gamma}{\zeta_{\det(\gamma)}} \big)^{\frac{k_1}{2}}(cz+d)^{-k_1}f_1(\gamma z) \cdot \zeta_{\det\gamma}^{l_2} \big(\frac{\det \gamma}{\zeta_{\det(\gamma)}} \big)^{\frac{k_2}{2}}(cz+d)^{-k_2} f_2(\gamma z)\\ &= \zeta_{\det\gamma}^{l_1+l_2}\big(\frac{\det \gamma}{\zeta_{\det(\gamma)}} \big)^{\frac{k_1+k_2}{2}}(cz+d)^{-(k_1+k_2)}f(\gamma z)\cdot g(\gamma z) \\ &= (f_1.f_2)|_{k_1+k_2, l_1+l_2} \gamma.
\end{aligned} 
\end{equation}


We now define the Drinfeld modular forms of weight $k$, type $l$ for $\Gamma_0(\mfn)$, as follows:
\begin{dfn}
\label{Definition of DMF}
A rigid holomorphic function $f:\Omega \longrightarrow C$ is said to be a Drinfeld modular form of weight $k$, type $l$ 
for $\Gamma_0(\mfn)$ if 
\begin{enumerate}
\item $f|_{k,l}\gamma= f$ , $\forall \gamma\in \Gamma_0(\mfn)$,
\item $f$ is holomorphic at the cusps of $\Gamma_0(\mfn)$.
\end{enumerate}
The space of Drinfeld modular forms of weight $k$, type $l$ for $\Gamma_0(\mfn)$ is denoted by $M_{k,l}(\Gamma_0(\mfn)).$
Furthermore, if $f$ vanishes at the cusps of $\Gamma_0(\mfn)$, then we say $f$ is a Drinfeld cusp form of
weight $k$, type $l$ for $\Gamma_0(\mfn)$ and the space of such forms is denoted by $M^1_{k,l}(\Gamma_0(\mfn))$.
\end{dfn}

If $k\not \equiv 2l \pmod {q-1}$, then $M_{k,l}(\Gamma_0(\mfn))=\{0\}$. So, without loss of generality, we can assume that $k\equiv 2l \pmod {q-1}$.

Let $u(z) :=  \frac{1}{e_L(\tilde{\pi}z)}$, where $e_L(z):= z{\prod_{\substack{0 \ne \lambda \in L }}}(1-\frac{z}{\lambda})$,
be the exponential function attached to the lattice $L$. 
Then, each Drinfeld modular form  $f\in M_{k,l}(\Gamma_0(\mfn))$ has a unique $u$-series expansion at $\infty$
given by $f=\sum_{i=0}^\infty a_f(i)u^i$.
Since $\psmat{\zeta}{0}{0}{1} \in \Gamma_0(\n)$ for $\zeta \in \F_q^\times$, condition $(1)$ of Definition~\ref{Definition of DMF} 
implies that $a_f(i)=0$ if $i\not\equiv l \pmod {q-1}$.
Hence, the $u$-series expansion of $f$ at $\infty$ can be written as
$$\sum_{0 \leq \ i \equiv l \mod (q-1)} a_f(i)u^{i}.$$ 
Note that any Drinfeld modular form of type $> 0$ is automatically a cusp form. 


\subsection{Examples}
We now give some examples of Drinfeld modular forms. 
\begin{exple}[\cite{G80}]
\label{Normalized Eisenstein series g_d}
Let $d\in \N$. For $z\in \Omega$, the function
\begin{equation*}
g_d(z) := (-1)^{d+1}\tilde{\pi}^{1-q^d}L_d \sum_{\substack{a,b\in \F_q[T] \\ (a,b)\ne (0,0)}} \frac{1}{(az+b)^{q^d-1}},
\end{equation*}
is a Drinfeld modular form of weight $q^d-1$, type $0$ for $\mathrm{GL}_2(A)$,
where $\tilde{\pi}$ is the Carlitz period, and $L_d:=(T^q-T)\ldots(T^{q^d}-T)$ is the least common multiple of all monic polynomials of degree $d$.
We refer to $g_d$ as the normalized Eisenstein series of weight $q^d-1$,  type $0$ for $\GL_2(A)$.
\end{exple}

\begin{exple}[\cite{G80a}]
For $z\in \Omega$, the function
\begin{equation*}
\Delta(z) := (T-T^{q^2})\tilde{\pi}^{1-q^2}E_{q^2-1} + (T^q-T)^q\tilde{\pi}^{1-q^2}(E_{q-1})^{q+1},
\end{equation*}
is a Drinfeld cusp form of weight $q^2-1$,  type $0$ for $\mathrm{GL}_2(A)$,
where $E_k(z)= \sum_{\substack{(0,0)\ne (a,b)\in A^2}}\frac{1}{(az+b)^k}.$
The $u$-series expansion of $\Delta$ at $\infty$ is given by $-u^{q-1}+ \cdots$. 
\end{exple}
\begin{exple}[Poincar\'e series]
\label{Poincare Series}

For $z \in \Omega$, define
\begin{equation*}
 h(z) := \sum_{\gamma\in H\char`\\ \GL_2(A)} \frac{\det \gamma . u(\gamma z)}{(cz+d)^{q+1}},
\end{equation*}
where $H=\big\{\psmat{*}{*}{0}{1}\in \GL_2(A)\big\}$ and $\gamma = \psmat{a}{b}{c}{d}\in \GL_2(A)$.
Then $h$ is a Drinfeld cusp form of weight $q+1$, type $1$ for $\mathrm{GL}_2(A)$ (cf.~\cite{Gek88}).
The $u$-series expansion of $h$ at  $\infty$ is given by $ -u -\cdots$.
\end{exple}



\begin{exple}
In \cite{Gek88}, Gekeler defined the function
\begin{equation*}
 E(z):= \frac{1}{\tilde{\pi}} \sum_{\substack{a\in \F_q[T] \\ a \ \mathrm{monic}}} \bigg( \sum_{b\in \F_q[T]} \frac{a}{az+b} \bigg)
\end{equation*}
which is analogous to the Eisenstein series of weight $2$ over $\Q$. 
The function $E$ is not modular, but it satisfies the following transformation rule
\begin{equation}
\label{Etransformation}
E(\gamma z) = (\mathrm{det} \gamma)^{-1} (cz+d)^2E(z) - c\tilde{\pi}^{-1}(\mathrm{det}\gamma)^{-1}(cz+d)
\end{equation}
for $\gamma = \psmat{a}{b}{c}{d}\in \GL_2(A)$.
%
In the proofs of Theorem~\ref{Drinfeld_Congruence_Thm_1} and Theorem~\ref{Drinfeld_Congruence_Thm_2},
we use the function $E(z)$ extensively.
\end{exple}

\subsection{Congruences and $\Theta$-operator:}
We now define the notion of a congruence between two Drinfeld modular forms.


\begin{dfn}
Let  $f=\sum_{n\geq 0} a_f(n)u^n$ be a formal $u$-series in $K[[u]]$. 
We define $$v_\mfp(f) := \inf_n v_\mfp (a_f(n)),$$ 
where $v_\mfp(a_f(n))$ is the $\mfp$-adic valuation of $a_f(n)$.  
We say $f$ has a $\p$-integral $u$-series expansion if $v_\p(f)\geq 0$.

\end{dfn}

\begin{dfn}[Congruence]
\label{definition for congruence of modular forms}
Let $f= \sum_{n\geq 0}a_f(n)u^n$ and $g= \sum_{n \geq 0}a_g(n)u^n$ be two  $u$-formal  series in $K[[u]]$. 
We say that $f\equiv g \pmod \p$ if $v_\p(f-g) \geq 1$.
\end{dfn}

By~\cite[Corollary 6.12]{Gek88}, we have $g_d\equiv 1 \pmod \mfp$.
A similar congruence holds for the Eisenstein series $E_{p-1}$ in the classical case giving an analogy between $g_d$ and $E_{p-1}$. 
Thus, one would expect that $g_d$  plays an essential role 
in the theory of Drinfeld modular forms.


\subsubsection{$\Theta$-operator:}\label{Theta operator}
For Drinfeld modular forms, there is an analogue of the Ramanujan's $\Theta$-operator, which is defined as
$$\Theta := \frac{1}{\tilde{\pi}}\frac{d}{dz}= -u^2\frac{d}{du}.$$

The $\Theta$-operator does not preserve modularity, but it preserves quasi-modularity. 
However, one can  perturb the $\Theta$-operator to create an operator which preserves modularity.
\begin{dfn}\cite[(8.5)]{Gek88}
For $k \in \N$ and $l \in \Z/(q-1)\Z$, we define the operator
$\partial_k:M_{k,l}(\Gamma_0(\n))\ra M_{k+2,l+1}(\Gamma_0(\n))$ by
\begin{equation}
\label{2nd Derivative}
\partial_k f := \Theta f + kEf. 
\end{equation}
\end{dfn}
For simplicity, we write $\partial$ instead of $\partial_k$ if the weight $k$ is clear from the context.
We conclude this section by recalling the following congruence:

\begin{thm}\cite[Theorem 1.1]{Vin10}
\label{Econgruence}
$E\equiv -\partial_{q^d-1}(g_d) \pmod \mfp.$
\end{thm}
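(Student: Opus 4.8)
The plan is to read off the congruence directly from the definition of $\partial_{q^d-1}$ together with Gekeler's congruence $g_d \equiv 1 \pmod{\p}$. By~\eqref{2nd Derivative},
$$\partial_{q^d-1}(g_d) = \Theta g_d + (q^d-1)\, E\, g_d,$$
so it suffices to show that the right-hand side is $\equiv -E \pmod{\p}$; the claimed identity then follows upon multiplying by $-1$. Note that although $E$ is only quasi-modular and the weights on the two sides do not match, this is harmless: the congruence is an assertion about formal $u$-series, and $g_d \equiv 1 \pmod{\p}$ plays exactly the role that $E_{p-1}\equiv 1 \pmod p$ plays classically, letting one raise the weight by $q^d-1$ without disturbing the $u$-expansion modulo $\p$.

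First I would record that $\Theta$ does not increase $\p$-adic valuations: since $\Theta = -u^2\frac{d}{du}$ sends $\sum_n a_n u^n$ to $-\sum_n n\, a_n u^{n+1}$, it merely multiplies the $u$-coefficients by the scalars $-n \in \F_q$, so $v_\p(\Theta h) \geq v_\p(h)$ for every $u$-series $h$, and in particular $\Theta$ preserves congruences modulo $\p$. Applying this to $g_d \equiv 1 \pmod{\p}$ (cf.~\cite[Corollary 6.12]{Gek88}) gives $\Theta g_d \equiv \Theta(1) = 0 \pmod{\p}$. Next I would reduce the scalar $q^d-1$: because $q = p^r$ and $C$ has characteristic $p$, the image of the integer $q^d-1 = p^{rd}-1$ in $\F_q \subseteq C$ equals $-1$, so $(q^d-1)\,E\,g_d = -E\,g_d$ as $u$-series over $C$. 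Finally, using that $E$ has a $\p$-integral $u$-expansion and writing $g_d = 1 + \pi h$ with $h$ being $\p$-integral, one gets $E g_d = E + \pi\, E h \equiv E \pmod{\p}$, hence $(q^d-1)\,E\,g_d \equiv -E \pmod{\p}$.

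Assembling the two computations yields $\partial_{q^d-1}(g_d) \equiv 0 + (-E) = -E \pmod{\p}$, which is the desired $E \equiv -\partial_{q^d-1}(g_d) \pmod{\p}$. The only input beyond formal manipulation is the $\p$-integrality of $E$, which I expect to be the main (though mild) point: it is what guarantees $\pi E h \equiv 0 \pmod{\p}$ above, and it follows from Gekeler's explicit computation of the $u$-expansion of $E$ with coefficients in $A$ (cf.~\cite{Gek88}). Everything else is the observation that $\Theta$ respects congruences and that the weight scalar $q^d-1$ collapses to $-1$ in characteristic $p$.
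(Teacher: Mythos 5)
The paper does not prove this statement; it is imported verbatim from Vincent \cite{Vin10}, so there is no internal proof to compare against. Your derivation is correct and is essentially the standard argument: it reduces the congruence to the two genuine external inputs, namely $g_d \equiv 1 \pmod{\p}$ (\cite[Corollary 6.12]{Gek88}, quoted in \S 2.2 of the paper) and the $\p$-integrality of the $u$-expansion of $E$ (quoted in \S 3.4 via \cite[Proposition 3.3]{Vin14}), and the remaining formal steps --- that $\Theta = -u^2\frac{d}{du}$ cannot decrease $v_\p$ because it only multiplies coefficients by images of integers in $\F_p$, and that the weight scalar $q^d-1$ reduces to $-1$ in characteristic $p$ --- are all verified correctly.
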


\section{Background material for the proofs of the main results}
\label{Background material for the proofs of the main results section}

We begin by introducing the (partial) Atkin-Lehner involutions, the modified Drinfeld modular form $E^*$ and the trace operators.

\subsection{Atkin-Lehner involutions}
\label{AL involution}

Let $\m=(m)$ and $\n=(n)$ be two ideals of $A$, where $m$ and $n$
are non-constant monic polynomials, such that $m || n$, i.e., $m \mid n$
with $(m,n/m)=1$. The following definition can be found in~\cite[Page 331]{Sch96}.
\begin{dfn}
\label{definition Atkin-Lehner involution}
The (partial) Atkin-Lehner involution $W_\m^{(\n)}$ is defined by the action of  
$\psmat {am}{b}{cn}{dm}$ on $M_{k,l}(\Gamma_0(\n))$, where $a,b, c,d\in A$ 
are such that $adm^2-bcn=\zeta\cdot m$ for some $\zeta\in \F_q^*$.
\end{dfn}

The following proposition shows that the operator $W_\m^{(\n)}$ is well-defined.
\begin{prop}\label{Atkin-Lehner representative}
Let $W_\m^\prime=\psmat{a^\prime m}{b^\prime}{c^\prime n}{d^\prime m}$, and 
$W_\m^{\prime\prime}= \psmat{a^{\prime\prime} m}{b^{\prime\prime}}{c^{\prime\prime} n}{d^{\prime\prime} m}$ 
be two representatives for the Atkin-Lehner involution $W_\m^{(\n)}$. Then,
\begin{equation*}
W_\m^\prime \Gamma_0(\n) = \Gamma_0(\n)W_\m^{\prime\prime}.
\end{equation*}
\end{prop}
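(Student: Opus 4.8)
The plan is to reduce the asserted identity $W_\m' \Gamma_0(\n) = \Gamma_0(\n) W_\m''$ to two independent facts: first, that a \emph{single} representative normalizes the group, i.e. $W \Gamma_0(\n) W^{-1} = \Gamma_0(\n)$ for every matrix $W = \psmat{am}{b}{cn}{dm}$ with $adm^2 - bcn = \zeta m$, $\zeta \in \F_q^*$; and second, that any two representatives differ by an element of $\Gamma_0(\n)$, namely $\delta := (W_\m')^{-1} W_\m'' \in \Gamma_0(\n)$. Granting these, the identity follows by a short coset manipulation: by the first fact $W_\m' \Gamma_0(\n) = \Gamma_0(\n) W_\m'$, and since $W_\m'' = W_\m' \delta$ with $\delta \in \Gamma_0(\n)$ we obtain $\Gamma_0(\n) W_\m'' = \Gamma_0(\n) W_\m' \delta = W_\m' \Gamma_0(\n) \delta = W_\m' \Gamma_0(\n)$, as required.

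For the second fact I would compute $(W_\m')^{-1} W_\m''$ directly, using $(W_\m')^{-1} = (\zeta' m)^{-1} \psmat{d'm}{-b'}{-c'n}{a'm}$. The key point is that, after multiplying out and dividing by $\zeta' m$, every entry lands in $A$: terms carrying a factor $n$ become integral because $m \| n$ forces $n/m \in A$ (so, for instance, $b'c'' n/(\zeta' m) = b'c''(n/m)/\zeta'$ is integral, division by the unit $\zeta'$ being harmless), while the remaining terms carry an explicit $m$ that cancels the denominator. The lower-left entry comes out as $n(a'c'' - a''c')/\zeta' \in \n$, and the determinant is $\det W_\m''/\det W_\m' = \zeta''/\zeta' \in \F_q^*$; hence $(W_\m')^{-1} W_\m''$ lies in $\GL_2(A)$ with lower-left entry in $\n$, i.e. in $\Gamma_0(\n)$.

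For the first fact (the normalizer property) I would conjugate a general $\gamma = \psmat{p}{q}{rn}{s} \in \Gamma_0(\n)$ and check that $W \gamma W^{-1} \in \Gamma_0(\n)$. Here the crucial input is the determinant relation written as $bcn = m(adm - \zeta)$, i.e. $bc\,(n/m) = adm - \zeta$, which is exactly what cancels the lone factor of $m$ in the denominator for those entries where $n$ does not already supply it (the diagonal entries), while the lower-left entry acquires an extra factor of $n$ and therefore lands in $\n$. Since $\det(W\gamma W^{-1}) = \det \gamma \in \F_q^*$, this yields $W \Gamma_0(\n) W^{-1} \subseteq \Gamma_0(\n)$; the reverse inclusion follows by running the identical computation for $\zeta m\, W^{-1} = \psmat{dm}{-b}{-cn}{am}$, which is again of Atkin--Lehner shape with determinant $\zeta m$ and induces the same conjugation.

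The only genuine obstacle is book-keeping: one must verify that every entry of $(W_\m')^{-1} W_\m''$ and of $W \gamma W^{-1}$ lies in $A$ after division by the determinant, and that the lower-left entry lies in $\n$. Both the hypothesis $m \| n$ (which gives $n/m \in A$) and the normalization $adm^2 - bcn = \zeta m$ (which gives $bcn \in mA$) enter essentially at precisely this step; dropping either would destroy integrality. I expect no conceptual difficulty beyond organizing these divisibilities cleanly.
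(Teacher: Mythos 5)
Your proof is correct and is in substance the same as the paper's, which simply asserts that ``a straightforward calculation'' gives the two inclusions $W_\m'\Gamma_0(\n)W_\m''^{-1}\subseteq\Gamma_0(\n)$ and $W_\m'^{-1}\Gamma_0(\n)W_\m''\subseteq\Gamma_0(\n)$; you carry out the same kind of matrix computation, relying on exactly the same two divisibility inputs ($n/m\in A$ from $m\,\|\,n$, and $bcn\in mA$ from the determinant normalization). Your organization into ``each representative normalizes $\Gamma_0(\n)$'' plus ``any two representatives differ by an element of $\Gamma_0(\n)$'' is a slightly cleaner factorization of the same argument, and supplies the details the paper omits.
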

\begin{proof}
A straightforward calculation shows that $W_\m^\prime\Gamma_0(\n)W_\m^{{\prime\prime}^{-1}}\subseteq \Gamma_0(\n)$ and \\ 
$W_\m^{\prime^{-1}}\Gamma_0(\n)W_\m^{\prime\prime}\subseteq \Gamma_0(\n).$
Hence the result follows.
\end{proof}

\subsection{Action of Atkin-Lehner Operator:}
Recall that $\p$ denotes a prime ideal of $A$ generated by a monic irreducible polynomial $\pi := \pi(T) \in A$ of degree $d$. 
Henceforth, $\m \subseteq A$ denotes an ideal of $A$ generated by 
a monic polynomial $m := m(T)\in A$ such that $(\p,\m)=1$ (i.e., $\pi \nmid m$).

Since $(\pi, m)=1$, 
we take $W_\p^{(\p\m)} := \psmat{\pi}{b}{\pi m}{d\pi}$ where $b,d\in A$, such that $d\pi^2-b\pi m=\pi$.  An easy verification shows that $W_\p^{(\p\m)}.W_\p^{(\p\m)} = \psmat{\pi}{0}{0}{\pi}\gamma$ for some $\gamma\in \Gamma_0(\p\m)$. 
This shows that $W_\p^{(\p\m)}$ acts as an involution on $M_{k,l}(\Gamma_0(\p\m))$. 
If $f\in M_{k,l}(\Gamma_0(\p\m))$ such that $f|_{k,l}W_\p^{(\p\m)}=\alpha f$ for $\alpha\in C\char`\\  \{0\}$, then 
we must have $\alpha^2=1$, i.e.,  $\alpha \in \{ \pm 1\}$.

For $f \in M_{k,l}(\Gamma_0(\p))$, the actions of $W^{(\p)}_{\mfp}$ and $W^{(\p\m)}_{\mfp}$ on $f$ are the same.
If $\m=(1)$, then we denote $W^{(\p)}_{\mfp}$ by $W_{\mfp}$ for simplicity.
In order to calculate the action of $W_{\mfp}^{(\p\m)}$ on some class of modular forms, we need to define the $U_\mfp$-operator.

\subsection{$U_\mfp$-operator and $V_\p$-operator}
For a rigid analytic function $f : \Omega \longrightarrow C$, 
we define:
\begin{equation*}
f| {U_\mfp}(z)= \frac{1}{\pi} \sum_{\substack{\lambda\in A\\ \mathrm{deg}(\lambda)<d}} f
                \big(\frac{z+\lambda}{\pi}\big),
                  \quad
f|V_\mfp(z)=f(\pi z).
\end{equation*}

\subsection{Construction of $E^*$ and its properties:}
We know that $E$ is not a Drinfeld modular form. The following proposition shows how to construct a Drinfeld modular forms using the function $E$.
\begin{prop}
\label{CongrueneofE}
\label{Atkin0nE*}
The function $E^* (z):= E(z) - \pi E|V_\p (z)$ is a Drinfeld modular form of weight $2$, type $1$ for $\Gamma_0(\p)$. Moreover, we have $E^*|_{2,1}W_\mfp=-E^*.$
\end{prop}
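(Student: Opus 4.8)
The plan is to verify the two assertions separately: first that $E^*(z) := E(z) - \pi\, E|V_\p(z)$ is a genuine Drinfeld modular form of weight $2$, type $1$ for $\Gamma_0(\p)$, and then that it is anti-invariant under the Atkin--Lehner involution $W_\p$. For the modularity, the key input is the transformation rule~\eqref{Etransformation}, which says precisely how far $E$ is from being modular: the failure is governed by the additive cocycle term $-c\tilde\pi^{-1}(\det\gamma)^{-1}(cz+d)$. The strategy is to show that the two copies of $E$ appearing in $E^*$ contribute equal and opposite cocycle terms under $\gamma\in\Gamma_0(\p)$, so that the defect cancels. Concretely, I would write $E|V_\p(z)=E(\pi z)$ and compute $E^*|_{2,1}\gamma$ for $\gamma=\psmat{a}{b}{c}{d}\in\Gamma_0(\p)$ (so $c\in\p$, i.e.\ $\pi\mid c$). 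Applying~\eqref{Etransformation} to the $E(z)$ term and, after conjugating $\gamma$ by $\psmat{\pi}{0}{0}{1}$, to the $E(\pi z)$ term, the quasi-modular error of the first term should be cancelled by $\pi$ times the error of the second precisely because $\pi\mid c$; this is where the factor $\pi$ and the $V_\p$-twist are designed to fit together. One must also check holomorphy at the cusps, which is immediate since $E$ is holomorphic on $\Omega$ with a $u$-expansion and $V_\p$ preserves this, and that the weight/type bookkeeping $(2,1)$ is consistent (noting $2\equiv 2\cdot 1\pmod{q-1}$ so the space is nonzero).

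For the Atkin--Lehner statement, the plan is to compute $E^*|_{2,1}W_\p$ directly using the explicit representative $W_\p=\psmat{\pi}{b}{\pi}{d\pi}$ (taking $m=1$) from \S\ref{AL involution}. The natural approach is to express $E^*$ purely in terms of the slash action on $E$: observe that $V_\p$ corresponds, up to scalar, to slashing by $\psmat{\pi}{0}{0}{1}$, so $E^*$ is a combination $E - \pi\,(E|_{\text{(something)}}\psmat{\pi}{0}{0}{1})$ of slashed copies of $E$. Then $E^*|_{2,1}W_\p$ becomes a sum of two terms, each of which I evaluate by applying~\eqref{Etransformation} together with the cocycle (chain) rule~\eqref{Action splits} for the slash operator. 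Because $W_\p$ interchanges the two ``levels'' (it roughly swaps the $\infty$ cusp with the cusp at $0$ adapted to $\p$), slashing by $W_\p$ should send the $E(z)$ piece to (a multiple of) the $E(\pi z)$ piece and vice versa, and tracking the scalars $\pi$ and the determinant/uniformizer factors in~\eqref{slash operator} should produce exactly the sign $-1$.

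The main obstacle I anticipate is the careful bookkeeping of the non-modular cocycle term under these matrix manipulations. Since $E$ transforms with an inhomogeneous correction, I cannot simply use multiplicativity of the slash action as for a true modular form; instead I must apply~\eqref{Etransformation} to each piece and verify that all the additive correction terms cancel in both computations. The delicate points are (i) getting the $\det\gamma$ and $\zeta_{\det\gamma}$ factors in~\eqref{slash operator} to match the $(\det\gamma)^{-1}$ in~\eqref{Etransformation}, and (ii) confirming that the linear-in-$(cz+d)$ error terms from the two pieces of $E^*$ genuinely cancel rather than merely combine --- this cancellation is exactly what forces the congruence $\pi\mid c$ (for modularity) and the sign $-1$ (for the $W_\p$-eigenvalue). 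I expect both cancellations to follow from a short but attention-demanding computation once the matrix identities $W_\p\psmat{\pi}{0}{0}{1}^{-1}$ and $\psmat{\pi}{0}{0}{1}W_\p$ are reduced modulo $\Gamma_0(\p)$, so the real work is organizing these reductions cleanly rather than any conceptual difficulty.
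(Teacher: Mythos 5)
Your overall plan coincides with the paper's proof: both verify the transformation law for $E^*$ under $\Gamma_0(\p)$ by applying~\eqref{Etransformation} to $E(z)$ and to $E(\pi z)$ and observing that the inhomogeneous cocycle terms cancel precisely because $\pi\mid c$, and both treat the Atkin--Lehner eigenvalue by a direct matrix computation with an explicit representative of $W_\p$. The one genuine gap is your claim that holomorphy at the cusps is ``immediate'' from the $u$-expansion of $E$ and the fact that $V_\p$ preserves it: that argument only controls the cusp at $\infty$. The curve for $\Gamma_0(\p)$ has exactly two cusps, $0$ and $\infty$ (the paper cites \cite[Proposition 6.7]{Gek01}), and the paper separately checks the cusp $0$ by computing $E^*|_{2,1}\psmat{0}{-1}{1}{0}$ via~\eqref{Etransformation} and verifying it has a power series expansion in $u$; since $E$ is only quasi-modular, holomorphy at one cusp does not transport to the other for free. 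Your write-up can be repaired without new ideas --- either do that explicit check, or note that once you have established $E^*|_{2,1}W_\p=-E^*$ the behaviour at $0$ is determined by the behaviour at $\infty$ because $W_\p$ interchanges the two cusps --- but as stated the cusp $0$ is not addressed. The remaining bookkeeping concerns you raise (matching the $\det\gamma$ and $\zeta_{\det\gamma}$ factors of~\eqref{slash operator} against~\eqref{Etransformation}, and reducing $W_\p\psmat{\pi}{0}{0}{1}^{-1}$ modulo $\Gamma_0(\p)$) are exactly the computations the paper leaves implicit, so no further comment is needed there.
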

  \begin{proof}
  An easy computation using~\eqref{Etransformation} shows that 
  $E^*(\gamma z) = (\det \gamma)^{-1} (cz+d)^2E^*(z)$
  for any $\gamma=\psmat{a}{b}{c}{d}\in \Gamma_0(\p)$.
  Since $E$ and $E|{V_\p}$ are holomorphic on $\Omega$, the function $E^*$ is also holomorphic on $\Omega$.
  Now, it remains to check the holomorphicity at the cusps of $\Gamma_0(\p)$.
   
By~\cite[Proposition 6.7]{Gek01}, we see that $0$ and $\infty$ are the only cusps of $\Gamma_0(\p)$.
The function $E^*$ is holomorphic at $\infty$ since $E$ and $E|V_\p$ are holomorphic at $\infty$.
A straightforward calculation using~\eqref{Etransformation} shows that $E^*(z)|_{2,1}\psmat{0}{-1}{1}{0}$ has a power series expansion in $u$.
Since the matrix $\psmat{0}{-1}{1}{0}$ takes the cusp $\infty$ to the cusp $0$, we conclude that $E^*$ is holomorphic at the cusp $0$.
 Thus $E^*$ is a Drinfeld modular form of weight $2$, type $1$ for $\Gamma_0(\p)$.
 The last part can be verified easily.
\end{proof}

The following two properties of $E^*$ are of importance to us.
\begin{itemize}
 \item If $f\in M_{k,l}(\Gamma_0(\p\m))$ such that $f|_{k,l}W_\mfp^{(\p\m)}=\alpha f$ with $\alpha \in \{ \pm 1\}$, 
then we have $(E^*f)|_{k+2,l+1}W_\mfp^{(\p\m)} = (-\alpha) E^*f$ (cf.~\eqref{Action splits}). 
So in order to change the sign of the eigenvalue of $W_\mfp^{(\p\m)}$ on $f$,
one can simply  multiply $f$ with $E^{*}$.
\item Since $E(z)$ and $E(\pi z)$ have coefficients in $A$
(cf.~\cite[Proposition 3.3]{Vin14}), we have the following congruence 
\begin{equation}
\label{E* is congruenct to E mod p}
E^* \equiv E \pmod \p.
\end{equation}
\end{itemize}

Next, we describe the action of  $W_{\mfp}^{(\p\m)}$ on $\partial_k f$.

\begin{prop}
\label{action of A-L on partial}
Suppose that $f\in M_{k,l}(\Gamma_0(\mfp\m))$ and $f|_{k,l}W_\mfp^{(\p\m)}= \alpha f$ with $\alpha\in \{\pm 1\}$. Then,
\begin{equation}
(\partial_k f)|_{k+2,l+1}W_\mfp^{(\p\m)} = \alpha(\partial_k f - kE^*f).
\end{equation}
\end{prop}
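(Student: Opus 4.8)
The plan is to compute $(\partial_k f)|_{k+2,l+1}W_\mfp^{(\p\m)}$ directly from the definition $\partial_k f = \Theta f + kEf$ in~\eqref{2nd Derivative}, splitting the calculation into the two summands. Because the slash operator is multiplicative in the sense of~\eqref{Action splits}, the term $(kEf)|W_\mfp^{(\p\m)}$ factors, so the main work is to understand how $\Theta f$ and the non-modular Eisenstein-type function $E$ transform under the Atkin–Lehner matrix $W_\mfp^{(\p\m)} = \psmat{\pi}{b}{\pi m}{d\pi}$.

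\medskip

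First I would handle $\Theta f$. The operator $\Theta = \frac{1}{\tilde\pi}\frac{d}{dz}$ does not commute with the weight-$k$ slash action; rather, differentiating the automorphy relation $f|_{k,l}W = \alpha f$ (equivalently $f(Wz) = \alpha\,\zeta_{\det W}^{-l}(\det W/\zeta)^{-k/2}(cz+d)^{k}f(z)$ with $c = \pi m$, $d = d\pi$) and applying the chain rule produces, after dividing by $\tilde\pi$, a copy of $\alpha\,\Theta f$ together with an extra term proportional to $c(cz+d)^{-1}$ coming from differentiating the automorphy factor $(cz+d)^{k}$. This is exactly the structure that the correction function $E$ is designed to cancel: the transformation rule~\eqref{Etransformation} for $E$ under a matrix $\gamma$ contributes a term $-c\tilde\pi^{-1}(\det\gamma)^{-1}(cz+d)$, i.e. a matching $c(cz+d)^{-1}$ contribution once it is weighted correctly. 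So next I would compute $(kEf)|_{k+2,l+1}W$ by combining the automorphy of $f$ with~\eqref{Etransformation} applied to the matrix $W_\mfp^{(\p\m)}$, keeping careful track of the determinant and the $\zeta_{\det W}$ factors built into the definition~\eqref{slash operator} of the slash operator.

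\medskip

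The key cancellation is that the spurious non-modular term from differentiating $f$ combines with the non-modular term of $E$ so that, after assembling $\Theta f + kEf$, the result is $\alpha(\partial_k f)$ \emph{up to} a correction; the correction is precisely the discrepancy between $E$ evaluated via its genuine transformation and the value $E|V_\p$ that distinguishes $E$ from the honest modular form $E^*(z) = E(z) - \pi E|V_\p(z)$ of Proposition~\ref{CongrueneofE}. Since $E^*$ is modular of weight $2$, type $1$ for $\Gamma_0(\p)$ and satisfies $E^*|_{2,1}W_\mfp = -E^*$, the term $E^* f$ transforms cleanly, contributing $(-\alpha)(E^* f)$ by~\eqref{Action splits}. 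Writing $E = E^* + \pi E|V_\p$ lets me isolate the defect, and the bookkeeping should collapse to the claimed formula $(\partial_k f)|_{k+2,l+1}W_\mfp^{(\p\m)} = \alpha(\partial_k f - kE^*f)$.

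\medskip

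The main obstacle, I expect, is the determinant and root-of-unity normalization: the slash operator~\eqref{slash operator} carries the factors $\zeta_{\det\gamma}^l$ and $(\det\gamma/\zeta_{\det\gamma})^{k/2}$, and since $\det W_\mfp^{(\p\m)} = \zeta\cdot\pi m$ is not a scalar times the identity, these factors must be reconciled between weight $k$ (for $f$) and weight $k+2$ (for $\partial_k f$) and weight $2$ (for $E^*$). Getting the constant $\pi$ in $E^* = E - \pi E|V_\p$ to emerge with exactly the right coefficient, rather than a spurious power of $\pi$ or a stray $\zeta$, will be the delicate point; everything else is the chain-rule computation sketched above together with the transformation rule~\eqref{Etransformation}.
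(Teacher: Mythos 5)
Your proposal follows essentially the same route as the paper: expand $\partial_k f = \Theta f + kEf$, obtain $(\Theta f)|_{k+2,l+1}W_\p^{(\p\m)} = \alpha\Theta f + \tfrac{km\alpha f}{\tilde{\pi}(mz+d)}$ by differentiating the automorphy relation, apply~\eqref{Etransformation} to the $E$-term, watch the two non-modular terms cancel, and recognize the leftover $k\alpha\pi E(\pi z)f$ as supplying exactly $-k\alpha E^*f$ after adding and subtracting $k\alpha Ef$. One small correction to your last paragraph: $\det W_\p^{(\p\m)} = d\pi^2 - b\pi m = \pi$ (not $\zeta\cdot\pi m$), so the determinant/root-of-unity bookkeeping is milder than you anticipate.
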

\begin{proof}
For $z\in \Omega$, we have  
\begin{align*}
&(\partial_kf)|_{k+2,l+1}W_\mfp^{(\p\m)}(z)\\
&= \pi^\frac{k+2}{2} (\pi m z+d\pi)^{-(k+2)}(\partial_k f)\Big(\frac{\pi z+b}{\pi m z+d\pi} \Big) \\ 
&= \pi^\frac{k+2}{2}(\pi m z+d\pi)^{-(k+2)} \Big\{\Theta f\Big(\frac{\pi z+b}{\pi m z+d\pi} \Big)+ kE \Big(\frac{\pi z+b}{\pi m z+d\pi} \Big) f\Big(\frac{\pi z+b}{\pi m z+d\pi} \Big)\Big\} \\
&= (\Theta f(z))|_{k+2,l+1}W_\mfp^{(\p\m)} + kE\Big(\frac{\pi z+b}{\pi m z+d\pi}\Big).\pi^\frac{k+2}{2}(\pi m z +d\pi)^{-(k+2)}f\Big(\frac{\pi z+b}{\pi m z+d\pi} \Big) \\
&= \alpha \Theta(f) + \frac{km\alpha f}{\tilde{\pi}(mz+d)}+ k\Big(\pi^2( m z+d)^2E(\pi z)-\frac{m\pi}{\tilde{\pi}}(m z +d)\Big)\frac{1}{\pi(m z+d)^2}f|_{k,l}W_\p^{(\p\m)}  \\ 
&= \alpha \Theta (f) + k\pi E(\pi z)(\alpha f) \\
&= \alpha \Theta (f) + k\alpha E f - k\alpha E f + k\pi E(\pi z)(\alpha f) = \alpha(\partial_kf - kE^* f).
\end{align*}
Here, we have used the equality 
$(\Theta f(z))|_{k+2,l+1}W_\mfp^{(\p\m)} = \alpha \Theta (f) + \frac{km\alpha f}{\tilde{\pi}(mz+d)}$.
\end{proof}


\subsection{Trace operators}
Now, we discuss the trace operators.
\begin{dfn}
For any $\mfa\mid \n$, we define the trace operator
\begin{equation*}
\Tr_{\frac{\n}{\mfa}}^\n : M_{k,l}(\Gamma_0(\n)) \longrightarrow M_{k,l}(\Gamma_0\Big(\frac{\n}{\mfa}\Big))
\end{equation*}
by
\begin{equation*}
\Tr_{\frac{\n}{\mfa}}^\n(f) = \sum_{\gamma\in \Gamma_0(\n)\char`\\ \Gamma_0(\frac{\n}{\mfa})} f|_{k,l}\gamma.
\end{equation*}
\end{dfn}

We will make use of the following proposition to explicitly compute the action of the trace operator
which can be thought of as a generalization of~\cite[Proposition 3.8]{Vin14} from level $\p$ to
level $\p\m$.

\begin{prop}
\label{pm to m}
Let $\p$, $\m$ be as before.
For any $f\in M_{k,l}(\Gamma_0(\p\m))$, we have 
\begin{equation*}
\Tr_{\m}^{\p\m}(f) = f + \pi^{1-k/2}(f|_{k,l}W_\p^{(\p\m)})| U_\p
\end{equation*}
\end{prop}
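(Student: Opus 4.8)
The plan is to unwind the definition of the trace operator $\Tr_\m^{\p\m}$ by choosing explicit coset representatives for $\Gamma_0(\p\m) \backslash \Gamma_0(\m)$ and then identifying the resulting sum with the two terms on the right-hand side. First I would recall that the index $[\Gamma_0(\m) : \Gamma_0(\p\m)]$ equals $|\p| + 1 = q^d + 1$, reflecting the fact that passing from level $\m$ to level $\p\m$ imposes the extra condition $\pi \mid c$ on the lower-left entry. Accordingly I expect a system of representatives to split into two natural families: the identity coset (contributing the term $f$ after applying $f|_{k,l}\,\mathrm{id} = f$), and a family of $q^d$ cosets parametrized by $\lambda \in A$ with $\deg(\lambda) < d$, which should assemble into the $U_\p$-type sum. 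The natural guess for the nontrivial representatives is matrices of the form $\psmat{1}{0}{\pi}{1}$ composed with the upper-triangular unipotents $\psmat{1}{\lambda}{0}{1}$, or equivalently matrices encoding the action $z \mapsto (z+\lambda)/\pi$ twisted by the Atkin-Lehner element.

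Next I would set up the cocycle bookkeeping carefully via the multiplicativity relation \eqref{Action splits} and the definition \eqref{slash operator} of the slash operator, keeping track of the $\zeta_{\det}$ and determinant-power factors, since these are precisely what produce the normalizing constant $\pi^{1-k/2}$ in the statement. The key computation is to show that for the $q^d$ non-identity cosets, the slash action $f|_{k,l}\gamma_\lambda$ equals (up to the stated constant) the $\lambda$-th summand of $(f|_{k,l}W_\p^{(\p\m)})|U_\p$. Concretely, I would write the representative as a product of the Atkin-Lehner matrix $W_\p^{(\p\m)} = \psmat{\pi}{b}{\pi m}{d\pi}$ with a matrix inducing $z \mapsto (z+\lambda)/\pi$, so that $f|_{k,l}\gamma_\lambda = \frac{1}{\pi}(f|_{k,l}W_\p^{(\p\m)})\big(\tfrac{z+\lambda}{\pi}\big)$ after collecting the weight-$k$ and type-$l$ automorphy factors; summing over $\lambda$ with $\deg(\lambda) < d$ then reproduces the definition of $U_\p$ exactly, with the prefactor assembling into $\pi^{1-k/2}$.

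The main obstacle I anticipate is the matrix algebra certifying that $\{\mathrm{id}\} \cup \{\gamma_\lambda : \deg \lambda < d\}$ is genuinely a complete and irredundant set of coset representatives for $\Gamma_0(\p\m)\backslash\Gamma_0(\m)$, and simultaneously that each $\gamma_\lambda$ decomposes as an element of $\Gamma_0(\p\m)$ (which acts trivially under the slash by modularity of $f$) times the specific $W_\p^{(\p\m)}$-times-unipotent matrix whose slash action realizes one term of $U_\p$. This requires matching determinants and lower-left entries modulo $\pi m$ and verifying the $\F_q^\times$-scalar twists $\zeta_{\det}$ cancel correctly; the determinant of $W_\p^{(\p\m)}$ is $\pi$ (times a unit), which is what injects the $\pi^{-k/2}$ into the constant, and the remaining $\pi^{1}$ from the $\frac{1}{\pi}$ in $U_\p$ combines to give $\pi^{1-k/2}$. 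Since this is the level-$\p\m$ analogue of \cite[Proposition 3.8]{Vin14}, I would model the coset decomposition on that proof, taking care that the presence of $m$ in the entries of $W_\p^{(\p\m)}$ does not disturb the argument because $(\p,\m)=1$ ensures $\pi$ remains a uniformizer at the relevant place.
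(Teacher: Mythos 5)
Your plan is essentially the paper's proof: unwind $\Tr_\m^{\p\m}$ over coset representatives of $\Gamma_0(\p\m)\backslash\Gamma_0(\m)$, let the identity coset contribute $f$, and factor each of the remaining $q^d$ representatives as $W_\p^{(\p\m)}$ times a matrix inducing $z\mapsto(z+\lambda)/\pi$, so that the automorphy factor $\pi^{-k/2}$ from $\det W_\p^{(\p\m)}=\pi$ combines with the $\pi^{-1}$ in $U_\p$ to give $\pi^{1-k/2}$. Two remarks. First, your opening guess $\psmat{1}{0}{\pi}{1}\psmat{1}{\lambda}{0}{1}=\psmat{1}{\lambda}{\pi}{\pi\lambda+1}$ is not usable: for $\m\neq(1)$ these matrices are not in $\Gamma_0(\m)$, and for $\m=(1)$ they already lie in $\Gamma_0(\p)$; but your second formulation $\gamma_\lambda=W_\p^{(\p\m)}\cdot\psmat{1/\pi}{\lambda/\pi}{0}{1}=\psmat{1}{\lambda+b}{m}{m\lambda+d\pi}$ is correct, and the verification you defer is routine (determinant $d\pi-bm=1$, and $\gamma_\lambda\gamma_{\lambda'}^{-1}$ has lower-left entry $m^2(\lambda'-\lambda)$, which lies in $\p\m$ only if $\lambda=\lambda'$ since $(\pi,m)=1$). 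Second, the paper instead starts from the representatives $\psmat{1}{j}{m}{mj+1}$ and only then factors out $W_\p^{(\p\m)}$, which produces translates $(f|W_\p^{(\p\m)})\big(\tfrac{z+j-b}{\pi}\big)$ with a shift by $-b$; it then needs an auxiliary lemma (Lemma~\ref{reciprocalroots}, identifying $\{u(\tfrac{z+j-a}{\pi})\}$ with the reciprocal roots of $\rho_\pi(x)-1/u(z)$) to see that the shifted and unshifted sums agree. Your direct parametrization by $\gamma_\lambda$ sidesteps that lemma entirely (one could equally invoke invariance of $f|W_\p^{(\p\m)}$ under $z\mapsto z+a$, $a\in A$, since $\{j-b\}$ is still a full set of residues mod $\pi$), which is a genuine small simplification — though you should be aware the shift issue exists, since it is the one subtlety your sketch glosses over.
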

\begin{proof}
By definition, we have 
\begin{equation*}
\Tr_\m^{\p\m}(f) = \sum_{\gamma\in \Gamma_0(\p\m)\char`\\ \Gamma_0(\m)} f|_{k,l}\gamma.
\end{equation*}
The set $\{\psmat{1}{j}{m}{m j+1}| j\in A, \mathrm{deg}(j)<d \}$, along with the identity matrix, 
is a complete set of representatives for $\Gamma_0(\p\m) \char`\\ \Gamma_0(\m).$ Using the coset representatives, we obtain
\begin{align*}
\Tr_\m^{\p\m}f &= f + \sum_{j\in A, \mathrm{deg}(j)<d} f|_{k,l}\pmat{1}{j}{m}{m j+1}\\ 
&= f+ \sum_{j\in A, \mathrm{deg}(j)<d} f|_{k,l}\pmat{\pi}{b}{\pi m}{\pi d}\pmat{\frac{1}{\pi}}{\frac{j-b}{\pi}}{0}{1}\\ 
&= f+ \sum_{j\in A, \mathrm{deg}(j)<d}(f|_{k,l} W_\p^{(\p\m)})| \pmat{\frac{1}{\pi}}{\frac{j-b}{\pi}}{0}{1}\\ &= f+\sum_{j\in A, \mathrm{deg}(j)<d}{\frac{1}{\pi^{k/2}}}(f|_{k,l}W_\p^{(\p\m)})(\frac{z+j-b}{\pi}) .
\end{align*}
To complete the proof of Proposition~\ref{pm to m}, we require the following lemma. Its proof 
is similar to that of~\cite[Lemma 5.3]{Vin14}, and hence, we omit the details.
\begin{lem}\label{reciprocalroots}
For a fixed $z\in \Omega$ and $a\in A$, the set $\{u(\frac{z+j-a}{\pi})| j\in A, \mathrm{deg}(j)<d\}$ is exactly 
the set of the reciprocal of the roots of the polynomial $\rho_\pi(x)-\frac{1}{u(z)}\in A((u(z)))[x]$ 
(recall that $\rho$ is the rank one Drinfeld module defined by~\eqref{Carlitz Module}). 

\end{lem}
By Lemma~\ref{reciprocalroots}, for a fixed $z\in \Omega$ and $b\in A$, the sets
$\{u(\frac{z+j}{\pi}) \vert j\in A, \mathrm{deg}(j)<d \}$ and  $\{u(\frac{z+j-b}{\pi}) \vert j\in A, \mathrm{deg}(j)<d \}$
are equal. 
Therefore, we conclude that 
\begin{align*}
\Tr_\m^{\p\m}f&= f+\sum_{j\in A, \mathrm{deg}(j)<d}\frac{1}{\pi^{k/2}}(f|_{k,l}W_\p^{(\p\m)})(\frac{z+j-b}{\pi})\\  &= f+\frac{1}{\pi^{k/2}}\sum_{j\in A, \mathrm{deg}(j)<d}(f|_{k,l}W_\p^{(\p\m)})(\frac{z+j}{\pi})\\ &=  f + \pi^{1-k/2}(f|_{k,l}W_\p^{(\p\m)})| U_\p.
\end{align*} 
\end{proof}

\subsection{Key Propositions:}
We are now ready to state and prove the main results of this section.
\begin{prop}
\label{ExistenceofF}
If $f\in M_{k,l}^1(\Gamma_0(\p\m))$ has $\p$-integral $u$-series expansion at $\infty$  such that $f|_{k,l}W_\p^{(\p\m)}= \alpha f$ with $\alpha\in \{\pm 1\}$, 
then there exists $F\in M_{(k-1)(q^d-1)+k,l}^1(\Gamma_0(\m))$ with $\p$-integral $u$-series expansion at $\infty$  such that $f\equiv F \pmod \p.$
\end{prop}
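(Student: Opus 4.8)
The plan is to produce $F$ by multiplying $f$ with an appropriate power of the normalized Eisenstein series $g_d$, whose key feature (recorded just before \S\ref{Theta operator}) is the congruence $g_d \equiv 1 \pmod{\p}$. Since $g_d \in M_{q^d-1,0}(\GL_2(A))$, the form $g_d^{\,k-1}$ has weight $(k-1)(q^d-1)$, type $0$, and is a modular form for the full group $\GL_2(A)$, hence in particular for $\Gamma_0(\m)$. I would set
\begin{equation*}
F := f \cdot g_d^{\,k-1}.
\end{equation*}
By the multiplicativity of the slash action on weights and types (cf.~\eqref{Action splits}), $F$ is a Drinfeld modular form for $\Gamma_0(\m)$ of weight $(k-1)(q^d-1)+k$ and type $l+0 = l$. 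Because $f$ is a cusp form (type $l$ with $f\in M^1_{k,l}$) and $g_d$ is holomorphic and nonvanishing in the relevant sense, $F$ should again be a cusp form, giving $F\in M^1_{(k-1)(q^d-1)+k,\,l}(\Gamma_0(\m))$ as required.

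The next step is the congruence. From $g_d \equiv 1 \pmod{\p}$ one gets $g_d^{\,k-1}\equiv 1 \pmod{\p}$ by raising to the power $k-1$ (a congruence of $u$-series in a $\p$-integral ring is preserved under multiplication and powers). Multiplying by the $\p$-integral series $f$ then yields
\begin{equation*}
F = f\cdot g_d^{\,k-1} \equiv f \cdot 1 = f \pmod{\p}.
\end{equation*}
Here I would be careful to invoke the hypothesis that $f$ has a $\p$-integral $u$-expansion, so that the product $f\cdot(g_d^{k-1}-1)$ has positive $\p$-valuation; this also shows $F$ itself is $\p$-integral, matching the conclusion.

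The only genuine subtlety — and the step I expect to require the most care — is the descent of the level from $\Gamma_0(\p\m)$ to $\Gamma_0(\m)$. A priori $F = f\cdot g_d^{k-1}$ is only modular for $\Gamma_0(\p\m)$, since $f$ lives there; multiplying by a form $g_d^{k-1}$ for the full group does not by itself remove the $\p$ from the level. This is precisely where the hypothesis $f|_{k,l}W_\p^{(\p\m)} = \alpha f$ with $\alpha \in \{\pm 1\}$ must enter. My plan is to use the trace computation of Proposition~\ref{pm to m}, namely $\Tr_\m^{\p\m}(f) = f + \pi^{1-k/2}(f|_{k,l}W_\p^{(\p\m)})|U_\p$, together with the eigenvalue relation, to control $f$ modulo $\p$ in terms of a form of level $\m$. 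Concretely, since $g_d\equiv 1$, one works modulo $\p$ where the level-$\p\m$ structure can be pushed down: the trace of $g_d^{k-1}f$ from $\Gamma_0(\p\m)$ to $\Gamma_0(\m)$ is genuinely of level $\m$, and modulo $\p$ this trace should be forced to agree with a scalar multiple of $f$ via the Atkin–Lehner eigenvalue, so that one can define $F$ as (a suitable normalization of) this trace and check $F\equiv f\pmod{\p}$. Assembling these pieces — verifying that the trace lands in the right weight/type space for $\Gamma_0(\m)$, that the $W_\p$-eigenvalue makes the correction term collapse modulo $\p$, and that $\p$-integrality is preserved throughout — is the heart of the argument, and is where I would spend the bulk of the effort.
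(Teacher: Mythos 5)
Your overall architecture is the same as the paper's: multiply $f$ by a weight-raising form congruent to $1 \pmod{\p}$, apply $\Tr_\m^{\p\m}$ to descend the level, and show via Proposition~\ref{pm to m} that the correction term $\pi^{1-\mathrm{wt}/2}\bigl((\,\cdot\,)|W_\p^{(\p\m)}\bigr)|U_\p$ dies modulo $\p$. But there is a genuine gap in the key valuation step: the multiplier $g_d^{k-1}$ is not good enough. One has $v_\p\bigl(g_d|_{q^d-1,0}W_\p\bigr) = \tfrac{q^d-1}{2}$ (not more), so $v_\p\bigl(g_d^{k-1}|W_\p\bigr) = \tfrac{(k-1)(q^d-1)}{2}$, and the correction term in the trace formula then has valuation bounded below only by
\begin{equation*}
1-\tfrac{(k-1)(q^d-1)+k}{2} + v_\p\bigl(f|_{k,l}W_\p^{(\p\m)}\bigr) + \tfrac{(k-1)(q^d-1)}{2} \;=\; 1-\tfrac{k}{2}+v_\p(f),
\end{equation*}
which is $\le 0$ for every $k\ge 2$ when $v_\p(f)=0$. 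So $\Tr_\m^{\p\m}(fg_d^{k-1}) \equiv fg_d^{k-1} \pmod{\p}$ does not follow, and the Atkin--Lehner eigenvalue relation cannot rescue this: its only role is to guarantee $v_\p(f|W_\p^{(\p\m)})=v_\p(f)\ge 0$, and the correction term is not a scalar multiple of $f$ that could be absorbed by renormalization (it involves $U_\p$ applied to $f\cdot(g_d|W_\p)^{k-1}$).

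The missing idea is the modified multiplier used in the paper (following Vincent), namely $g_{(k)} := \bigl(g_d-\pi^{(q^d-1)/2}\,g_d|_{q^d-1,0}W_\p\bigr)^{k-1} \in M_{(k-1)(q^d-1),0}(\Gamma_0(\p))$. It still satisfies $g_{(k)}\equiv 1 \pmod{\p}$, but crucially $g_{(k)}|_{(k-1)(q^d-1),0}W_\p \equiv 0 \pmod{\p^{\frac{(k-1)(q^d-1)}{2}+k-1}}$: each factor picks up one extra power of $\p$ under $W_\p$ beyond the trivial $\tfrac{q^d-1}{2}$. Those extra $k-1$ powers turn the bound $1-\tfrac{k}{2}$ above into $\tfrac{k}{2}\ge 1$, which is exactly what is needed to conclude $\Tr_\m^{\p\m}(fg_{(k)})\equiv fg_{(k)}\equiv f \pmod{\p}$ and to take $F:=\Tr_\m^{\p\m}(fg_{(k)})$. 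Without replacing $g_d^{k-1}$ by $g_{(k)}$, the argument as you have sketched it does not close.
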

\begin{proof}
For an integer $k\geq 2$, we define
$$
 g_{(k)} := (g_d-\pi^{(q^d-1)/2}g_d\vert_{q^d-1,0}W_\p)^{k-1},$$
where $g_d$ is the Eisenstein series 
of weight $q^d-1$, type $0$ for $\mathrm{GL}_2(A)$ (cf. Example~\ref{Normalized Eisenstein series g_d}). 
Then $g_{(k)}\in M_{(k-1)(q^d-1), 0}(\Gamma_0(\p))$ and it satisfies the following congruences 
\begin{equation}
\label{g_k congruence 1}
g_{(k)} \equiv 1 \pmod \p 
\end{equation}
and
\begin{equation}
\label{g_k congruence 2}
g_{(k)}|_{(k-1)(q^d-1),0}W_\mfp \equiv 0 \pmod {\mfp^{\frac{(k-1)(q^d-1)}{2}+k-1}},
\end{equation}
(cf. \cite[Page 32]{Vin14} for more details).

Since  $f\in M_{k,l}^1(\Gamma_0(\p\m))$ has $\p$-integral $u$-series expansion at $\infty$, we have $v_\p(f)\geq 0$. The function
 $fg_{(k)}$ is a Drinfeld cusp form of weight $(k-1)(q^d-1)+k$, type $l$ for $\Gamma_0(\p\m)$
with $\p$-integral $u$-series expansion at $\infty$. 
Thus, $\Tr_\m^{\p\m}fg_{(k)}$ is a Drinfeld cusp form of weight $(k-1)(q^d-1)+k$, type $l$ for $\Gamma_0(\m)$.

From Proposition \ref{pm to m}, we obtain
\begin{equation*}
\Tr_{\m}^{\p\m}(fg_{(k)}) - fg_{(k)}	 = \pi^{1-\frac{k+(k-1)(q^d-1)}{2}}(fg_{(k)}|_{(k-1)(q^d-1)+k,l}W_\p^{(\p\m)})| U_\p.
\end{equation*}
Since $v_\p(f| U_\p)\geq v_\p(f)$ (cf.~\cite[Corollary 3.2]{Vin14}), it follows that:
\begin{align*}
 v_\p(\Tr_{\m}^{\p\m}(&fg_{(k)}) - fg_{(k)}) \\
 & \geq 1- \frac{(k-1)(q^d-1)+k}{2} + v_\p(fg_{(k)}|_{(k-1)(q^d-1)+k,l}W_\p^{(\p\m)})\\ 
              &= 1- \frac{(k-1)(q^d-1)+k}{2}+v_\p(f|_{k,l}W_\p^{(\p\m)})+ v_\p(g_{(k)}|_{(k-1)(q^d-1),0} W_\p) \\ 
 &\underset{\eqref{g_k congruence 2}}{=} 1-\frac{(k-1)(q^d-1)+k}{2}+ v_\p(f|_{k,l}W_\p^{(\p\m)}) +\frac{(k-1)(q^d-1)}{2} +k-1 \\
 &= \frac{k}{2} + v_\p(f|_{k,l}W_\p^{(\p\m)})\\ 
 & = \frac{k}{2} + v_\p(f) \geq \frac{k}{2} \geq 1 \qquad (\mrm{since}\ f|_{k,l}W_\p^{(\p\m)}= \alpha f\ \mrm{and} \ v_\p(f)\geq 0 ).
 \end{align*}
 We thus get 
\begin{equation}
\label{congruence of trace of f.g_k}
\Tr_\m^{\p\m}fg_{(k)} \equiv fg_{(k)}  \pmod \p.
\end{equation} 
Combining~\eqref{g_k congruence 1} with~\eqref{congruence of trace of f.g_k}, we conclude that
\begin{equation}
\label{final_congruence}
\Tr_\m^{\p\m}fg_{(k)} \equiv f  \pmod \p.
\end{equation}
Thus, the Drinfeld modular form $F:= \Tr_\m^{\p\m}fg_{(k)}  \in M_{(k-1)(q^d-1)+k,l}^1(\Gamma_0(\m))$ 
has  $\p$-integral $u$-series expansion at $\infty$ and it satisfies the conclusion of the proposition.

\end{proof}
\begin{prop}
\label{exists HH}
 Suppose that $h\in M_{k+2,l+1}^1(\Gamma_0(\p\m))$ has a $\p$-integral $u$-series expansion at $\infty$ and $\alpha \in \{ \pm 1 \}$. Then there exists 
 $H\in M_{(k-1)q^d+3,l+1}^1(\Gamma_0(\m))$
 such that $H\equiv \alpha \pi h|_{k+2,l+1}W_\p^{(\p\m)} \pmod \p$.
\end{prop}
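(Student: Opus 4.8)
The plan is to follow the construction of Proposition~\ref{ExistenceofF} almost verbatim, the only change being that I would multiply $h|W_\p^{(\p\m)}$ (rather than $h$ itself) by the auxiliary form $g_{(k)}$ before tracing down. Since $(k-1)q^d+3=(k-1)(q^d-1)+(k+2)$ and $g_{(k)}\in M_{(k-1)(q^d-1),0}(\Gamma_0(\p))\subseteq M_{(k-1)(q^d-1),0}(\Gamma_0(\p\m))$, the product $\phi:=g_{(k)}\cdot\big(h|_{k+2,l+1}W_\p^{(\p\m)}\big)$ is a cusp form in $M_{(k-1)q^d+3,\,l+1}(\Gamma_0(\p\m))$, and I would take
\[
H:=\alpha\pi\,\Tr_\m^{\p\m}(\phi)\in M_{(k-1)q^d+3,\,l+1}^1(\Gamma_0(\m)),
\]
which has the asserted weight and type irrespective of integrality.

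To identify $H$ modulo $\p$ I would run $\phi$ through Proposition~\ref{pm to m}. Writing $w:=(k-1)q^d+3$, it gives $\Tr_\m^{\p\m}(\phi)=\phi+\pi^{1-w/2}\big(\phi|_{w,l+1}W_\p^{(\p\m)}\big)|U_\p$. Now $W_\p^{(\p\m)}$ is an involution on $M_{k+2,l+1}(\Gamma_0(\p\m))$ and acts on the level-$\p$ form $g_{(k)}$ as $W_\p$ (\S\ref{AL involution}), so the splitting~\eqref{Action splits} yields $\phi|W_\p^{(\p\m)}=h\cdot\big(g_{(k)}|_{(k-1)(q^d-1),0}W_\p\big)$; multiplying by $\alpha\pi$ gives
\[
H=\alpha\pi\,g_{(k)}\big(h|W_\p^{(\p\m)}\big)+\alpha\pi^{\,2-w/2}\big(h\cdot(g_{(k)}|W_\p)\big)|U_\p .
\]
Using $v_\p(\cdot|U_\p)\ge v_\p(\cdot)$ together with $v_\p(h)\ge 0$ and the bound $v_\p(g_{(k)}|W_\p)\ge\frac{(k-1)(q^d-1)}2+k-1$ from~\eqref{g_k congruence 2}, a one-line computation shows the second summand has $\p$-valuation $\ge \tfrac k2\ge 1$, so it vanishes modulo $\p$. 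Finally $g_{(k)}\equiv 1\pmod\p$ by~\eqref{g_k congruence 1}, whence $H\equiv\alpha\pi\,h|W_\p^{(\p\m)}\pmod\p$, as required.

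The hard part will be the single non-formal ingredient hidden in the last step: for both the final congruence and the $\p$-integrality of $H$ to make sense, I need $\pi\,h|W_\p^{(\p\m)}$ to be $\p$-integral, i.e. $v_\p\big(h|_{k+2,l+1}W_\p^{(\p\m)}\big)\ge -1$ (this is precisely what turns $g_{(k)}\equiv 1$ into $\alpha\pi\, g_{(k)}(h|W_\p^{(\p\m)})\equiv\alpha\pi\, h|W_\p^{(\p\m)}$). To prove it I would use the factorization $W_\p^{(\p\m)}=\psmat{1}{b}{m}{mb+1}\psmat{\pi}{0}{0}{1}$ with $\psmat{1}{b}{m}{mb+1}\in\Gamma_0(\m)\cap\SL_2(A)$, which gives $h|W_\p^{(\p\m)}=\pi^{(k+2)/2}\big(h|_{k+2,l+1}\psmat{1}{b}{m}{mb+1}\big)|V_\p$. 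As $V_\p$ preserves $\p$-valuations, this reduces the claim to a lower bound on the $\p$-denominators of $h$ in its expansion at the cusp $\psmat{1}{b}{m}{mb+1}^{-1}\infty$, a cusp that is not $\Gamma_0(\p\m)$-equivalent to $\infty$; hence the bound is genuine input rather than bookkeeping. I expect this cusp-integrality estimate to be the main obstacle. It is automatic when $h$ is an eigenform (for instance $\p$-new, where $h|W_\p^{(\p\m)}=\mp h$ is visibly $\p$-integral), which is the situation relevant to Corollary~\ref{Drinfeld_Congruence_Thm_3}.
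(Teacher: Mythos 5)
Your construction is exactly the paper's: it sets $H:=\Tr_\m^{\p\m}\big(\alpha\pi\,(h|_{k+2,l+1}W_\p^{(\p\m)})\cdot g_{(k)}\big)$ and runs the identical computation via Proposition~\ref{pm to m}, the splitting~\eqref{Action splits}, and the congruences~\eqref{g_k congruence 1},~\eqref{g_k congruence 2}, arriving at the same valuation bound $\tfrac k2\ge 1$. The cusp-integrality of $\pi\,h|_{k+2,l+1}W_\p^{(\p\m)}$ that you flag as the main obstacle is in fact used silently by the paper too (in the final step passing from $Xg_{(k)}\equiv X g_{(k)}\cdot 1$ to $X\pmod\p$), and, as you observe, it holds automatically in the only situation where the proposition is invoked, since there $\pi h|_{k+2,l+1}W_\p^{(\p\m)}=\alpha(g-\partial f)$ is visibly $\p$-integral.
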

\begin{proof}
Since $\alpha \pi h|_{k+2,l+1}W_\mfp^{(\p\m)}.g_{(k)} \in M_{k+2,l+1}^1(\Gamma_0(\p\m))$, by the definition of the trace operator, we get that
$\Tr_{\m}^{(\p\m)}(\alpha \pi h|_{k+2,l+1}W_\mfp^{(\p\m)}.g_{(k)}) \in M_{(k-1)q^d+3,l+1}^1(\Gamma_0(\m)).$

%
By Proposition~\ref{pm to m} and~\eqref{Action splits}, we obtain
\begin{align*}
& v_\p(\Tr_{\m}^{(\p\m)}(\alpha \pi h|_{k+2,l+1}W_\mfp^{(\p\m)}.g_{(k)}) - \alpha \pi h|_{k+2,l+1}W_\mfp^{(\p\m)}.g_{(k)}) \\
        &= v_\mfp(\pi^{1-\frac{(k-1)q^d+3}{2}}\alpha \pi(((h|_{k+2,l+1}W_\mfp^{(\p\m)}).g_{(k)})|_{\color{black}{(k-1)(q^d-1)+k+2,l+1}}W_\mfp^{(\p\m)})|U_\mfp) \\ 
        &= v_\mfp(\alpha \pi^{2-\frac{(k-1)q^d+3}{2}} (((h|_{k+2,l+1}W_\mfp^{(\p\m)})|_{k+2,l+1}W_\mfp^{(\p\m)}).{\color{black}{(g_{(k)}|_{(k-1)(q^d-1),l+1}}}W_\mfp ))|U_\mfp)  \\ 
        &= v_\mfp(\alpha \pi^{2-\frac{(k-1)q^d+3}{2}}(h.(g_{(k)}|_{(k-1)(q^d-1),0}W_\mfp))|U_\mfp) \\ 
        & \geq v_\mfp(\alpha \pi^{2-\frac{(k-1)q^d+3}{2}}g_{(k)}|_{(k-1)(q^d-1),0}W_\mfp)\qquad (\mrm{since} \ v_\mfp(f|U_\mfp) \geq v_\mfp(f) \ \mrm{and} \ v_p(h) \geq 0) \\
        & \geq \frac{(k-1)(q^d-1)}{2}+k-1+2-\frac{(k-1)q^d+3}{2} =\frac{k}{2} \geq 1\qquad (\mrm{using}~\eqref{g_k congruence 2}).
\end{align*}
We thus get
\begin{equation}
\label{congruence of trace of h A-L.g_k}
\Tr_{\m}^{(\p\m)}(\alpha \pi h|_{k+2,l+1}W_\mfp^{(\p\m)}.g_{(k)}) \equiv \alpha \pi h|_{k+2,l+1}W_\mfp^{(\p\m)}.g_{(k)} \pmod \p.
\end{equation}
Combining~\eqref{g_k congruence 1} with~\eqref{congruence of trace of h A-L.g_k}, we conclude that
$$\Tr_{\m}^{(\p\m)}(\alpha \pi h|_{k+2,l+1}W_\mfp^{(\p\m)}.g_{(k)}) \equiv \alpha \pi h|_{k+2,l+1}W_\mfp^{(\p\m)} \pmod \p.$$
Thus, the Drinfeld modular form $H:=\Tr_{\m}^{(\p\m)}(\alpha \pi h|_{k+2,l+1}W_\mfp^{(\p\m)}.g_{(k)}) \in M_{(k-1)q^d+3,l+1}^1(\Gamma_0(\m))$ satisfies the conclusion of the proposition.

\end{proof}

\begin{remark}  
The above result is true for any $\alpha\in K$ with $v_\p(\alpha)\geq 0$. 
Throughout the article, we work with $\alpha \in \{ \pm 1 \}$, so 
we restrict ourselves in deviating from it.
\end{remark}
Now, we are ready to prove Theorem~\ref{Drinfeld_Congruence_Thm_1} and Theorem~\ref{Drinfeld_Congruence_Thm_2}

\section{Proof of Theorem~\ref{Drinfeld_Congruence_Thm_1}}
\label{Section_Proof_Thm_1}
Before going into the proof of Theorem~\ref{Drinfeld_Congruence_Thm_1}, we recall the notion of weight filtration for Drinfeld modular forms 
for $\GL_2(A)$ and list some of its properties.
Let $M_k$ denote the space of Drinfeld modular forms of weight $k$ (any type) for $\mathrm{GL}_2(A)$.

\subsection{Filtration for level $1$ case}
\label{Level1Filtration}
Recall that $\mfp$ denotes a prime ideal of $A$ generated by a monic irreducible polynomial $\pi :=\pi(T)$ of degree $d$.  
Let $f$ be a Drinfeld modular form of weight $k$, type $l$ for $\GL_2(A)$ with $\p$-integral $u$-series expansion at $\infty$.

\begin{dfn}
If $f \not \equiv 0 \pmod \p$, then we define the weight filtration $w(\overline{f})$ of $f$ as 
$$w(\overline{f}) :=\inf \{k_0| \exists f^\prime\in M_{k_0} \ \mathrm{with} \ {f} \equiv {f^\prime} \pmod {\p} \}.$$
If $f\equiv 0 \pmod \p$, then we define $w(\overline{f})=-\infty$.
Since the weight filtration of $f$ is defined mod $\p$, we choose to write $w(\overline{f})$ rather than $w(f)$.
\end{dfn}

To discuss some properties of $w(\overline{f})$,
we recall the structure of the ring $M(\GL_2(A))=\bigoplus_{k,l} M_{k,l}(\GL_2(A))$.
By~\cite[Theorem 5.13]{Gek88}, we have $M(\GL_2(A))=C[g_1,h].$ In particular, every Drinfeld modular form corresponds 
to a unique isobaric polynomial in $g_1$ and $h$ over $C$. 
Let $A_d(X,Y)$ and $B_d(X,Y)$ be the isobaric polynomials attached to $g_d$ and $\partial(g_d)$ respectively,
i.e., $A_d(g_1,h) = g_d$ and $B_d(g_1,h)=\partial(g_d)$.

In~\cite{Gek88}, ~\cite{Vin10}, the authors proved the following properties of $w(\overline{f})$.
\begin{thm}
\label{Drinfeld level 1 Filtration}
Let $f\in M_{k,l}(\mathrm{GL}_2(A))$ and $f= \phi(g_1,h)$ where $\phi(X,Y)$ is the isobaric polynomial attached to $f$.
Then the following hold.
\begin{enumerate}
\item If $f \not \equiv 0 \pmod \p$, then $w(\overline{f})\equiv k \pmod {q^d-1}$,
\item $w(\overline{f})<k$ if and only if $\overline{A}_d| \overline{\phi}$, where $\overline{U}$ denotes the reduction of $U \pmod \p$.
\item $\overline{A_d}(X,Y)$ shares no common factor with $\overline{B_d}(X,Y)$.
\end{enumerate}
\end{thm}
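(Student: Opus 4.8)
The plan is to reduce all three statements to the structure of the reduction modulo $\p$ of the graded ring $M(\GL_2(A))=C[g_1,h]$, using as external inputs only the congruence $g_d\equiv 1\pmod\p$ and this structure theorem. Let $\F_\p=A/\p\cong\F_{q^d}$, let $\overline{g_1},\overline{h}\in\F_\p[[u]]$ be the reductions of the ($\p$-integral) $u$-expansions, and consider the reduction homomorphism $\rho\colon\F_\p[X,Y]\to\F_\p[[u]]$ defined by $X\mapsto\overline{g_1}$, $Y\mapsto\overline{h}$, where $\F_\p[X,Y]$ is graded by $\deg X=q-1$ and $\deg Y=q+1$. The heart of the matter is to identify the relation ideal $I:=\ker\rho$, and I claim $I=(\overline{A_d}-1)$. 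The inclusion $\supseteq$ is immediate from $\rho(\overline{A_d})=\overline{g_d}=1$. For $\subseteq$, the series $\overline{h}=-u+\cdots$ is nonconstant, hence transcendental over $\F_\p$, so the transcendence degree of $\F_\p(\overline{g_1},\overline{h})$ over $\F_\p$ is at least $1$; the nontrivial relation $\overline{A_d}(\overline{g_1},\overline{h})=1$ forces it to be exactly $1$, so $I$ is a prime of height $1$ containing $\overline{A_d}-1$, and once one checks that $\overline{A_d}-1$ is irreducible (so that it generates a height-$1$ prime) a comparison of heights yields $I=(\overline{A_d}-1)$. The decisive feature is that $\overline{A_d}$ is isobaric of weight $q^d-1\equiv 0\pmod{q^d-1}$, so $\overline{A_d}-1$, and hence $I$, is homogeneous for the grading of $\F_\p[X,Y]$ by weight modulo $q^d-1$. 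Thus $\overline M:=\F_\p[X,Y]/I$ carries a well-defined $\Z/(q^d-1)\Z$-grading, and statement (1) is precisely the assertion that the weight of a nonzero element of $\overline M$ is well defined modulo $q^d-1$. The same homogeneity shows that $\rho$ is injective on each fixed-weight component: a nonzero multiple $(\overline{A_d}-1)R$ always involves two distinct weights (the top coming from $\overline{A_d}R$, the bottom from $R$, differing by $q^d-1>0$), so $I$ contains no nonzero isobaric element.

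For statement (2), the direction $\Leftarrow$ is immediate: if $\overline{A_d}\mid\overline\phi$, write $\overline\phi=\overline{A_d}\,\overline\psi$ with $\psi$ isobaric of weight $k-(q^d-1)$; then $\overline f=\rho(\overline{A_d})\,\rho(\overline\psi)=\overline{\psi(g_1,h)}$ exhibits a congruent form of weight $<k$, so $w(\overline f)<k$. For $\Rightarrow$, let $k_0=w(\overline f)<k$ with witness $f'\in M_{k_0}$ and isobaric polynomial $\phi'$. By (1), $k-k_0=j(q^d-1)$ for some $j\ge 1$, and since $g_d\equiv 1$ we have $g_d^{\,j}f'\equiv f'\equiv f\pmod\p$ with $g_d^{\,j}f'\in M_k$ of isobaric polynomial $A_d^{\,j}\phi'$. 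Hence $\rho(\overline\phi)=\rho(\overline{A_d}^{\,j}\overline{\phi'})$ for two weight-$k$ isobaric polynomials, and the fixed-weight injectivity above gives $\overline\phi=\overline{A_d}^{\,j}\overline{\phi'}$, so $\overline{A_d}\mid\overline\phi$.

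For statement (3), I would first record that $\partial$ is a derivation of the graded ring: from $\partial_k=\Theta+kE$ one checks $\partial(f_1f_2)=\partial f_1\cdot f_2+f_1\cdot\partial f_2$ directly. Since $g_d=A_d(g_1,h)$ and $g_1,h$ are algebraically independent over $C$, this gives $B_d=\partial A_d$ as polynomials, where $\partial$ acts through $\partial g_1$ and $\partial h$; reducing, $\overline{B_d}=\partial\overline{A_d}$. A common irreducible factor of $\overline{A_d}$ and $\overline{B_d}=\partial\overline{A_d}$ would then correspond to a point of the reduced modular curve (the weighted projective line with homogeneous coordinates $\overline{g_1},\overline{h}$) at which $\overline{g_d}$ vanishes to order at least $2$, i.e.\ to a non-reduced point of the supersingular locus; the plan is to rule this out by invoking the separability of the supersingular polynomial, due to Gekeler, which yields coprimality.

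The main obstacle is the relation-ideal computation $I=(\overline{A_d}-1)$ underlying (1) and (2)---the Drinfeld analogue of Swinnerton--Dyer's description of mod $p$ modular forms---and in particular the irreducibility of $\overline{A_d}-1$; everything else in (1) and (2) is then formal. For (3) the genuine input is the separability of the supersingular locus, the remainder being the Leibniz computation $B_d=\partial A_d$.
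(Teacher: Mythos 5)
A preliminary remark: the paper gives no proof of this theorem at all --- it is quoted as a known result from \cite{Gek88} and \cite{Vin10} --- so your proposal is in effect a reconstruction of the arguments in those references. Your skeleton is the right one (and is essentially theirs): identify the kernel $I$ of $\F_\p[X,Y]\to\F_\p[[u]]$, $X\mapsto\overline{g_1}$, $Y\mapsto\overline{h}$, as the principal ideal $(\overline{A_d}-1)$; deduce (1) and the injectivity of $\rho$ on isobaric components from the homogeneity of $\overline{A_d}-1$ for the weight-mod-$(q^d-1)$ grading; deduce (2) by multiplying the witness $f'$ by $g_d^{\,j}$; and prove (3) from the Leibniz rule together with the separability of the supersingular locus. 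All of the formal deductions you draw \emph{from} the identity $I=(\overline{A_d}-1)$ are correct.

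The genuine gap is the irreducibility of $\overline{A_d}-1$, which you defer as something ``one checks''. It is not a routine check: by your own height argument it is \emph{equivalent} to $I=(\overline{A_d}-1)$, so the entire content of (1) and (2) has been relocated into this single unproved claim, and an isobaric polynomial minus $1$ can perfectly well be reducible ($G^2-1=(G-1)(G+1)$ whenever $\overline{A_d}$ happened to be a square). What rescues the argument is that $\overline{A_d}$ is squarefree --- which is exactly the separability input you already invoke for (3) --- combined with the elementary fact that $F-1$ is irreducible for any squarefree isobaric $F$ of positive weight (compare graded components of a putative factorization $F-1=GH$, using that the top components of $G$ and $H$ are coprime). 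You should make that link explicit; as written, (1)--(2) rest on an unsupported assertion while the needed input already sits among your hypotheses for (3). Two lesser points. First, your Leibniz computation needs the explicit values $\partial h=0$ and $\partial g_1=c\,h$ with $c\not\equiv 0\pmod\p$ (the first follows from $\partial\Delta=0$ and $\Delta=-h^{q-1}$, the second from $\dim M_{q+1,1}=1$ and $E\equiv-\partial g_d\not\equiv 0$), giving $\overline{B_d}=c\,Y\,\partial\overline{A_d}/\partial X$; and the step from ``$P$ is a common irreducible factor of $\overline{A_d}$ and $\partial\overline{A_d}/\partial X$'' to ``a non-reduced supersingular point'' must dispose of the characteristic-$p$ degenerations $P=Y$ (impossible, since $g_d\equiv 1$ is not cuspidal) and $\partial P/\partial X=0$, i.e.\ $P\in\F_\p[X^p,Y]$. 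Second, for $\rho(\overline{\phi})=\overline{f}$ to make sense you need that a form with $\p$-integral $u$-expansion has an isobaric polynomial with $\p$-integral coefficients; this holds by triangularity of the $u$-expansions of the monomials $g_1^a h^b$, but it should be said.
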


\subsection{Proof of Theorem~\ref{Drinfeld_Congruence_Thm_1}}
Let us recall the statement of this theorem for the convenience of the reader.
\begin{thm}
\label{Drinfeld_Congruence_Thm_1_Text}
Suppose that $f\in M_{k,l}^1(\Gamma_0(\mfp))$ and $g\in M_{k+2,l+1}^1(\Gamma_0(\mfp))$ have 
$\p$-integral $u$-series expansions at $\infty$ with $\Theta f \equiv g \pmod {{\p}}$. 
Assume that $w(\overline{F})= (k-1)(q^d-1)+k$ where $F$ is as in Proposition~\ref{ExistenceofF} corresponding to $kf$,
and $(k,p)=1$.
If $f|W_\mfp = \alpha f$ and $g|W_\mfp= \beta g$
with $\alpha,\beta \in \{ \pm 1 \}$, then $\beta = -\alpha$.
\end{thm}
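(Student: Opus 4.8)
The plan is to rule out the possibility $\beta=\alpha$; since $\alpha,\beta\in\{\pm1\}$ and $p$ is odd, this is equivalent to proving $\beta=-\alpha$. First I would rewrite the hypothesis congruence in terms of a genuine modular form. By the definition of $\partial_k$ in~\eqref{2nd Derivative} we have $\Theta f=\partial_k f-kEf$, and since $E^*\equiv E\pmod\p$ by~\eqref{E* is congruenct to E mod p}, the assumption $\Theta f\equiv g\pmod\p$ becomes
\[
g\equiv \partial_k f-kE^*f=:G\pmod\p ,
\]
where $G\in M^1_{k+2,l+1}(\Gamma_0(\p))$ is an honest Drinfeld cusp form, both $\partial_k f$ and $E^*f$ being modular of weight $k+2$, type $l+1$.

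Next I would compute the Atkin--Lehner action on $G$. Proposition~\ref{action of A-L on partial} gives $(\partial_k f)|W_\p=\alpha(\partial_k f-kE^*f)$, while the multiplicativity of the slash operator~\eqref{Action splits}, together with $E^*|_{2,1}W_\p=-E^*$ from Proposition~\ref{Atkin0nE*} and $f|W_\p=\alpha f$, yields $(kE^*f)|W_\p=-\alpha kE^*f$. Subtracting,
\[
G|W_\p=\alpha\,\partial_k f=\alpha G+\alpha k\,E^*f .
\]
Granting for the moment that the congruence $g\equiv G\pmod\p$ survives application of $W_\p$, i.e. $g|W_\p\equiv G|W_\p\pmod\p$, one obtains $\beta g\equiv\alpha g+\alpha kE^*f$, so that
\[
(\beta-\alpha)\,g\equiv \alpha k\,E^*f\pmod\p .
\]
If $\beta=\alpha$ the left side vanishes, forcing $E^*f\equiv0\pmod\p$. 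But $(k,p)=1$ and, by Theorem~\ref{Econgruence} together with part~(3) of Theorem~\ref{Drinfeld level 1 Filtration}, $E^*\equiv E\not\equiv0\pmod\p$ (the isobaric polynomial $\overline{B_d}$ attached to $\partial(g_d)$ is nonzero); since the reduced $u$-expansions lie in the integral domain of power series over the residue field $A/\p$, this would force $f\equiv0\pmod\p$. That is impossible: by Proposition~\ref{ExistenceofF} the level-$1$ form $F\equiv kf\pmod\p$ has finite filtration $w(\overline F)=(k-1)(q^d-1)+k$, so $F\not\equiv0$, and $(k,p)=1$ then gives $f\not\equiv0\pmod\p$. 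Hence $\beta\neq\alpha$, i.e. $\beta=-\alpha$.

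The hard part is exactly the step I granted. The involution $W_\p$ acts through the cusp $0$, so the $u$-expansion at $\infty$ of $G|W_\p$ is governed by the expansion of $G$ at $0$, and an identity valid only at $\infty$ need not persist after reduction; this is precisely where the hypotheses $w(\overline F)=(k-1)(q^d-1)+k$ and $(k,p)=1$ become indispensable. I would secure it by descending to level $1$ via Proposition~\ref{ExistenceofF}: the reductions of $kf$ and of $kg=k\,\Theta f\equiv \Theta F\pmod\p$ are then captured by level-$1$ objects, where the full filtration machinery for $M(\GL_2(A))=C[g_1,h]$ of Theorem~\ref{Drinfeld level 1 Filtration} is available. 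The assumption that $w(\overline F)$ is maximal guarantees that this descent is faithful, so that no collapse of filtration masks the sign of the $W_\p$-eigenvalue; the counterexamples of the final section show that once the filtration drops this faithfulness, and with it the conclusion, genuinely fails. Controlling the behaviour of $G$ at the cusp $0$ through the $\GL_2(A)$-structure is thus the delicate heart of the argument, and it is exactly the point at which, for the higher-level Theorem~\ref{Drinfeld_Congruence_Thm_2}, one is instead forced to invoke the geometry of Drinfeld modular curves and Hattori's results.
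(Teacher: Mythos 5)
Your reduction to the genuine modular form $G=\partial_k f-kE^*f$ and the computation $G|W_\p=\alpha\,\partial_k f$ match the opening of the paper's proof, and you have correctly located the difficulty. But the step you grant --- $g|W_\p\equiv G|W_\p\pmod\p$ --- is not a technical point that can be ``secured'' separately: writing $g-G=\pi h$ with $h$ $\p$-integral, your granted step asserts $\pi h|_{k+2,l+1}W_\p\equiv 0\pmod\p$, and under the contradiction hypothesis $\beta=\alpha$ one has the exact identity $\alpha\pi h|_{k+2,l+1}W_\p=\pi h-kE^*f$, so the granted step is \emph{equivalent} to $kE^*f\equiv 0\pmod\p$, i.e.\ to the very contradiction you are trying to produce. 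The ``hard part'' is therefore not a preservation-of-congruence lemma; it is the entire theorem, and your sketch of how to fill it (``the descent is faithful, so that no collapse of filtration masks the sign'') contains no mechanism. Note also that your conditional argument, were the granted step available, would use only $f\not\equiv 0\pmod\p$ and not the filtration hypothesis, and \S\ref{Section_Counter_Example} shows this cannot suffice.

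What the paper actually does is structurally different: it never proves $\pi h|_{k+2,l+1}W_\p\equiv 0\pmod\p$. From $kE^*f=\pi h-\alpha\pi h|_{k+2,l+1}W_\p$ it deduces $kE^*f\equiv-\alpha\pi h|_{k+2,l+1}W_\p\pmod\p$ and then plays two weight bounds against each other. On one side, Proposition~\ref{exists HH} (which your proposal never invokes) applies the trace operator to $\alpha\pi h|_{k+2,l+1}W_\p\cdot g_{(k)}$ to produce a level-one form $H$ of weight $(k-1)q^d+3$ with $H\equiv\alpha\pi h|_{k+2,l+1}W_\p\pmod\p$; this is the device that tames the a priori non-integral expansion of $h|_{k+2,l+1}W_\p$ at $\infty$. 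On the other side, $kE^*f\equiv E^*F\equiv-\partial(g_d)F\pmod\p$, and the hypothesis $w(\overline F)=(k-1)(q^d-1)+k$ combined with Theorem~\ref{Drinfeld level 1 Filtration}(2),(3) (namely $\overline{A_d}\nmid\overline\phi$ and $\overline{A_d}$, $\overline{B_d}$ share no common factor, hence $\overline{A_d}\nmid\overline{B_d}\overline\phi$) forces $w(\overline{\partial(g_d)F})=kq^d+2$. Since $w(\overline H)\leq(k-1)q^d+3<kq^d+2$, the congruence $H\equiv\partial(g_d)F\pmod\p$ is impossible, and this weight-filtration clash --- not a cancellation forcing $E^*f\equiv 0$ --- is the contradiction. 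You would need to supply both the trace-operator construction of $H$ and this filtration computation to turn your outline into a proof.
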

\begin{proof}
We shall prove this theorem   by contradiction.  
Suppose that $\beta = \alpha$. 

Since $E^* \equiv E \pmod \p$ (cf. \eqref{E* is congruenct to E mod p}) and $\Theta f \equiv g \pmod {{\p}}$, we have  
$$\partial f \equiv g + kE^*f \pmod {\p} \qquad (\mrm{cf}.~\eqref{2nd Derivative}).$$
Hence, there exists $h\in M_{k+2,l+1}^1(\Gamma_0(\p))$ with $v_\p(h)\geq 0$ such that 
\begin{equation}
\label{Bh exists}
 g- \partial f + kE^*f = \pi h  .
\end{equation}
Applying $W_\p$ on both sides of~\eqref{Bh exists}, we obtain
\begin{equation}
\alpha(g-\partial f) = \pi h|_{k+2,l+1}W_\p \qquad (\mrm{cf}.\ \mrm{Proposition}~\ref{action of A-L on partial}).
\end{equation}
Combining this with~\eqref{Bh exists}, we get
\begin{equation} 
 kE^*f = \pi h - \alpha \pi h|_{k+2,l+1}W_\p 
\end{equation}
and hence
\begin{equation*} 
 kE^*f \equiv - \alpha \pi h|_{k+2,l+1}W_\p \pmod \p.
\end{equation*}
Since $v_\p(E^*)\geq 0$ and $v_\p(f) \geq 0$, setting $\m=(1)$ in Proposition~\ref{ExistenceofF} , there exists $F\in M_{(k-1)q^d+1,l}^1(\GL_2(A))$ such that $kf \equiv F \pmod \p.$ Hence 
\begin{equation}
\label{BnewcongE}
 E^*F  \equiv  - \alpha \pi h|_{k+2,l+1}W_\p \pmod \p.
\end{equation}
By~\eqref{BnewcongE} and Proposition~\ref{exists HH} with $\m=(1)$, we obtain
$$E^*F\equiv -H \pmod \p,$$ where $H\in M_{(k-1)q^d+3,l+1}^1(\GL_2(A))$ such that $H\equiv \alpha \pi h|_{k+2,l+1}W_\p \pmod \p$.

Recalling that $E^*\equiv E \equiv -\partial(g_d) \pmod \mfp$, we get 
\begin{equation}
\label{B-final congruence}
H\equiv \partial(g_d)F \pmod \p.
\end{equation}
The last congruence implies that $H$ has $\p$-integral $u$-series expansion at $\infty$.
Since  both sides of~\eqref{B-final congruence} are congruent mod $\p$, we have
\begin{equation}
\label{weight filtrations are equal level p case}
w(\overline{H}) = w(\overline{\partial(g_d)F}).
\end{equation}
We now calculate the weight filtration $w(\overline{\partial(g_d)F})$.




Let $\phi$ be the unique isobaric polynomial attached to $F$. Consequently, the unique isobaric polynomial attached to $\partial(g_d)F$ is $B_d\phi$. The weights of $F$ and $\partial(g_d)F$ are $(k-1)(q^d-1)+k$ and $kq^d+2$, respectively. 
The assumption $w(\overline{F})=(k-1)(q^d-1)+k$ implies that $\overline{A_d}\nmid \overline{\phi}$ (cf. Theorem~\ref{Drinfeld level 1 Filtration}(2)). Combining this with Theorem~\ref{Drinfeld level 1 Filtration}(3), we obtain  
\begin{equation}
\label{A_d does not divide B_d.phi}
\overline{A_d}\nmid \overline{B_d} \overline{\phi}.
\end{equation}
Finally,~\eqref{A_d does not divide B_d.phi} and Theorem~\ref{Drinfeld level 1 Filtration}(2) together yield $w(\overline{\partial(g_d)F}) = kq^d+2$.

Since the weight of $H$ is $(k-1)q^d+3$,
we conclude that $$w(\overline{H})\leq (k-1)q^d+3 < kq^d+2 = w(\overline{\partial(g_d)F}),$$
which contradicts~\eqref{weight filtrations are equal level p case}.
Therefore, we must have $\beta=-\alpha$.
\end{proof}

\subsection{Proof of Corollary~\ref{Drinfeld_Congruence_Cor_1}}
Now, arguing as in the proof of Theorem~\ref{Drinfeld_Congruence_Thm_1}, we get $F\in M_{q^d+1,1}^1(\GL_2(A))$ and $H\in M_{q^d+3,2}^1(\GL_2(A))$ when $k=2$ and $l=1$. Since  $f \not \equiv 0 \pmod \mfp$, the congruence
$w(\overline{F})\equiv q^d+1 \pmod {q^d-1}$ implies that the possible values of $w(\overline{F})$ are  $2$ or $q^d+1$.

The space $M(\GL_2(A))$ is generated by $g_1$ and $h$, where $g_1$ is of weight $q-1$
and $h$ is of weight $q+1$. For $q>3$, the weight of $g_1$ is $q-1 > 2$. Therefore, there is no modular form of weight $2$. For $q=3$, we have  $M_{2,l}(\GL_2(A))=\{0\}$ whenever $l \not \equiv 0 \pmod 2$ and $M^1_{2,0}(\mathrm{GL}_2(A))= \{0\}$. 
Thus, $w(\overline{F})$
cannot be $2$. Hence we obtain $$w(\overline{F})=q^d+1.$$ 
Now, the desired result follows from Theorem~\ref{Drinfeld_Congruence_Thm_1}.

\section{Proof of Theorem~\ref{Drinfeld_Congruence_Thm_2}}
\label{Section_Proof_Thm_2}
Before going into the proof of Theorem~\ref{Drinfeld_Congruence_Thm_2}, let us introduce some notations and recall the relevant results
from ~\cite{Hat20} and~\cite{HatJNT20}.  Using them, we shall prove an important proposition  about the weight filtration.
  
\subsection{Geometry of the Drinfeld modular curves}
Let $\mfm = (m)$ be as in Theorem~\ref{Drinfeld_Congruence_Thm_2}, where $m\in A$ is a non-constant monic polynomial.
The conditions on $\m$ allow us to choose a subgroup $\Delta \subseteq (A/\m)^\times$ such that 
the natural inclusion $\F_q^\times\xhookrightarrow{}(A/\m)^\times$ gives $\Delta \oplus \F_q^\times = (A/\m)^\times$. 

The fine moduli scheme $Y_1^\Delta(\m)$ classifies the tuples $(E,\lambda, [\mu])$,
where $E$ is a Drinfeld module of rank $2$ over an $A[1/m]$-scheme $S$, $\lambda$ is a $\Gamma_1(\m)$-structure on $E$, and 
$[\mu]$ is a $\Delta$-structure on $E$  (cf. ~\cite[Page 20]{Hat20} for more details).

Let $E_{\mathrm{un}}^\Delta$ be the universal Drinfeld module over $Y_1^\Delta(\m)$ 
and $\omega_\mathrm{un}^\Delta$ be the sheaf of invariant differential forms on $E_{\mathrm{un}}^\Delta.$
Let $X_1^\Delta(\m)$ be the compactification of $Y_1^\Delta(\m)$. 
Suppose that $R_0$ is a flat $A[1/m]$-algebra which is an excellent regular domain. The invertible sheaf $\omega_{\mathrm{un}}^\Delta$ on $Y_1^\Delta(\m)_{R_0}$
extends to an invertible sheaf $\overline{\omega}_{\mathrm{un}}^\Delta$ on $X_1^\Delta(\m)_{R_0}$ (cf.~\cite[Theorem 5.3]{HatJNT20}). 

Following~\cite[Page 26]{Hat20}, we define
$\Gamma_1^\Delta(\m):= \{\gamma \in \mathrm{SL}_2(A) | \gamma \equiv \psmat{1}{*}{0}{1} \pmod \m\}.$

\begin{dfn}\cite[Definition 4.7]{Hat20}
Let $k$ be an integer and $M$ be an $A[1/m]$-module. The space of Drinfeld modular forms of level $\Gamma_1^\Delta(\m)$ and weight $k$
with coefficients in $M$ is defined by 
$$M_k(\Gamma_1^\Delta(\m))_M := H^0(X_1^\Delta(\m)_{A[1/\m]},(\overline{\omega}_{\mathrm{un}}^\Delta)^{\otimes k} \otimes_{A[1/\m]} M).$$
\end{dfn}

Consider the map $x_{\infty}^{\Delta} : \mrm{Spec}(A[1/m][[x]]) \ra X_1^\Delta(\m)_{A[1/\m]}$ as in~\cite[Theorem 5.3]{HatJNT20}. 

\begin{dfn}[$x$-expansion]
\label{Identification}
For any $f \in M_k(\Gamma_1^\Delta(\m))_M$, we define the $x$-expansion of $f$ at the $\infty$-cusp as the unique power series $f_{\infty}(x) \in A[1/m][[x]] \otimes_{A[1/m]} M$, satisfying
$  (x^\Delta_{\infty})^* (f)  = f_{\infty}(x) (dX)^{\otimes k}.$
\end{dfn}
By~\cite[Proposition 4.8(ii)]{Hat20},  if $f_{\infty}(x) = 0$, then $f=0$, which we refer to as the $x$-expansion principle. 
\begin{remark}
	The $x$-expansion principle implies that  we can consider a modular form  not only as a global section
	but also in terms of its $x$-expansion.
\end{remark}

This definition of Drinfeld modular forms is compatible with the classical Drinfeld modular forms over $C$, 
which are described in~\cite{Gek86},~\cite{Gek88}. In fact, one can show that the $x$-expansion $f_{\infty}(x)$ of $f$ at the $\infty$-cusp agrees with the $u$-series expansion at $\infty$ of the associated classical Drinfeld modular form to $f$ (cf. ~\cite[Page 26]{Hat20} and references therein for more details).


By~\cite[Proposition 4.8(ii)]{Hat20}, for any $k\geq 2$, and any $A[1/m]$-module $M$, we have an isomorphism 
\begin{equation}
\label{base cahnge}
M_k(\Gamma_1^\Delta(\m))_{A[1/m]} \otimes_{A[1/m]} M \cong M_k(\Gamma_1^\Delta(\m))_M.
\end{equation}
Let $A_\p$ be the completion of $A$ at $\p$. By~\eqref{base cahnge}, we obtain an isomorphism
$$M_k(\Gamma_1^\Delta(\m))_{A[1/m]} \otimes_{A[1/m]} A_\p \cong M_k(\Gamma_1^\Delta(\m))_{A_\p},$$
tensoring with $A/\p$, we obtain the following isomorphism
\begin{equation}
\label{Isomorphism}
\begin{aligned}
M_k(\Gamma_1^\Delta(\m))_{A_\p} \otimes_{A[1/m]} A/\p &\cong
M_k(\Gamma_1^\Delta(\m))_{A[1/m]} \otimes_{A[1/m]} (A_\p \otimes_{A[1/m]} A/\p) \\ 
&\cong  M_k(\Gamma_1^\Delta(\m))_{A/\p}.
\end{aligned}
\end{equation}
Let $\widetilde{f}$ denote the image of $f \in M_k(\Gamma_1^\Delta(\m))_{A_\p}$ under the isomorphism~\eqref{Isomorphism}.
By \cite[Corollary 9.4 of Chapter III]{Har77}, the element $\widetilde{f}$ can also be treated as an element of 
$H^0(X_1^\Delta(\m)_{A/\p},(\overline{\omega}_{\mathrm{un}}^\Delta|_{A/\p})^{\otimes k})$.

\begin{remark}
\label{x-expansion and u-expansion of reduction}
For  $f \in M_k(\Gamma_1^\Delta(\m))_{A_\p}$, the $x$-expansion of $\widetilde{f}$ at the $\infty$-cusp is same as the mod $\p$-reduction of the $u$-series expansion of $f$ at $\infty$.
\end{remark}

\subsection{Weight filtration:}
We are now in a position to define the weight filtration for any $f \in M_k(\Gamma_1^\Delta(\m))_{A_\p}$.

\begin{dfn}\label{Level m filtration}
If $f\not \equiv 0\pmod \p$, then we define the weight filtration $w(\overline{f})$ of $f$ as 
\begin{equation}
\label{definition of filtration for level m}
w(\overline{f}) :=\inf \{k_0 |\exists \  f^\prime \in M_{k_0}(\Gamma_1^\Delta(\m))_{A_\p} \ \mathrm{with} \ {f} \equiv {f^\prime} \pmod {\p} \}.
\end{equation}
By $f \equiv f^\prime \pmod \p$, we mean that the corresponding $x$-expansions of $f$ and $f^\prime$ at the $\infty$-cusp are congruent modulo $\p$, i.e., $f_{\infty}(x) \equiv f^\prime_{\infty}(x) \pmod \p$. If $f\equiv 0 \pmod \p$, then define $w(\overline{f})=-\infty$.
 
By~\cite[Theorem 4.16]{Hat20},  we have $w(\overline{f}) \equiv k \pmod {q^d-1}$. 

\end{dfn}

In the proof of Theorem~\ref{Drinfeld_Congruence_Thm_2}, the following proposition about the weight filtration of $f$ is useful. We follow the approach of Gross in~\cite[Page 459]{Gro90} to prove the 
proposition. 

\begin{prop}\label{Lower Filtration}
If $f\in M_{k}(\Gamma_1^\Delta(\m))_{A_\p}$, then $w(\overline{f})<k$ if and only if $\widetilde{f}$ vanishes at all supersingular points of $X_1^\Delta(\m)_{A/\p}$.
\end{prop}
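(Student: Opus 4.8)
The plan is to prove this via the standard dictionary between vanishing of a modular form modulo $\p$ and the geometry of the Hasse invariant, following Gross's approach as the paper indicates. The key object is the \emph{Hasse invariant} $\rH$, which in the Drinfeld setting is a modular form modulo $\p$ of weight $q^d-1$ whose $x$-expansion at $\infty$ equals $1$ and whose divisor on the special fibre $X_1^\Delta(\m)_{A/\p}$ is precisely the reduced sum of the supersingular points. Concretely, the reduction $\widetilde{g_d}$ of the Eisenstein series $g_d$ plays the role of this Hasse invariant: by the congruence $g_d \equiv 1 \pmod \p$ recalled in the excerpt, $\widetilde{g_d}$ has trivial $x$-expansion, so it is a nonzero global section of $(\overline\omega_{\mathrm{un}}^\Delta|_{A/\p})^{\otimes(q^d-1)}$ with the correct weight, and its vanishing locus is the supersingular locus. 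I would isolate this as the geometric input, citing the relevant supersingularity results from~\cite{Hat20}.

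First I would establish the forward direction. Suppose $w(\overline{f}) < k$, so there is $f' \in M_{k_0}(\Gamma_1^\Delta(\m))_{A_\p}$ with $k_0 < k$, $k_0 \equiv k \pmod{q^d-1}$, and $f \equiv f' \pmod \p$. Writing $k - k_0 = n(q^d-1)$ for some $n \geq 1$, the section $\widetilde{f}$ agrees (via the $x$-expansion principle of~\cite[Proposition 4.8(ii)]{Hat20}) with $\widetilde{f'} \cdot \widetilde{g_d}^{\,n}$, since both have the same $x$-expansion at $\infty$ and lie in $H^0$ of the same line bundle on $X_1^\Delta(\m)_{A/\p}$. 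Because $\widetilde{g_d}^{\,n}$ vanishes on the supersingular locus, so does $\widetilde{f}$. This direction is essentially formal once the Hasse-invariant property of $\widetilde{g_d}$ is in hand.

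The converse is the substantive direction and I expect it to be the main obstacle. Suppose $\widetilde f$ vanishes at every supersingular point. Then $\widetilde f$ is divisible, as a section, by the Hasse invariant $\widetilde{g_d}$: the vanishing locus of $\widetilde{g_d}$ is exactly the reduced supersingular locus, so the quotient $\widetilde f / \widetilde{g_d}$ defines a global section of $(\overline\omega_{\mathrm{un}}^\Delta|_{A/\p})^{\otimes(k-(q^d-1))}$, i.e.\ an element of $M_{k-(q^d-1)}(\Gamma_1^\Delta(\m))_{A/\p}$. The delicate point is justifying this divisibility at the scheme-theoretic level: one must check that $\widetilde{g_d}$ vanishes to order exactly $1$ at each supersingular point (so that vanishing of $\widetilde f$ there forces divisibility), and that $X_1^\Delta(\m)_{A/\p}$ is smooth (or at least regular) at the supersingular points so that the local rings are discrete valuation rings where ``vanishing implies divisibility'' is valid. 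These smoothness and multiplicity-one facts are exactly what the hypotheses on $\m$ and the results of~\cite{Hat20},~\cite{HatJNT20} are there to supply.

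Having produced $\overline h := \widetilde f / \widetilde{g_d} \in M_{k-(q^d-1)}(\Gamma_1^\Delta(\m))_{A/\p}$, I would lift it through the isomorphism~\eqref{Isomorphism} to some $h \in M_{k-(q^d-1)}(\Gamma_1^\Delta(\m))_{A_\p}$ with $\widetilde h = \overline h$. Comparing $x$-expansions at $\infty$, and using that $\widetilde{g_d}$ has $x$-expansion $1$, gives $f \equiv h \pmod \p$ by Remark~\ref{x-expansion and u-expansion of reduction} and the $x$-expansion principle. Since $h$ has weight $k - (q^d-1) < k$, this witnesses $w(\overline f) < k$, completing the converse. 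I would close by remarking that the congruence $w(\overline f) \equiv k \pmod{q^d-1}$ from Definition~\ref{Level m filtration} guarantees that the only way to lower the weight is in steps of $q^d-1$, so dividing by a single Hasse invariant is exactly the right operation and no finer analysis of intermediate weights is needed.
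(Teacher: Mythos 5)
Your proposal is correct and takes essentially the same route as the paper: both directions hinge on $\widetilde{g_d}$ being the Hasse invariant (divisor the reduced supersingular locus, $x$-expansion $1$), with the forward direction obtained by identifying $\widetilde{f}$ with $\widetilde{f'}\cdot\widetilde{g_d}^{\,n}$ via the $x$-expansion principle and the converse by dividing out $\widetilde{g_d}$ and lifting through the isomorphism~\eqref{Isomorphism}. The paper simply outsources the forward-direction divisibility to a step in the proof of \cite[Proposition 4.22]{Hat20} rather than writing out the $x$-expansion comparison, and, like you, it asserts without further elaboration the multiplicity-one and non-vanishing properties of $\widetilde{g_d}$ that make the quotient a holomorphic section.
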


\begin{proof}
Suppose $w(\overline{f})=k^\prime<k$. Then there exists $f^\prime\in M_{k^\prime}(\Gamma_1^\Delta(\m))_{A_\p}$ such that $f\equiv f^\prime \pmod \p$.
Let $\widetilde{f}, \widetilde{f^\prime}$ and $\widetilde{g_d}$  be the images 
of $f, f^\prime $ and $g_d$ respectively under the isomorphism~\eqref{Isomorphism}.  
By line $5$ in the proof of Proposition 4.22 in \cite{Hat20} together with the injectivity of $(4.15)$ in \cite{Hat20},
we get that $\widetilde{g_d}$ divides $\widetilde{f}$.
Since $g_d$ is a lift of the Hasse invariant, we conclude that
$\widetilde{f}$ vanishes at all supersingular points of $X_1^\Delta(\m)_{A/\p}$.

Conversely, suppose that $\widetilde{f}$ vanishes at all supersingular points of $X_1^\Delta(\m)_{A/\p}$. 
Since $\widetilde{g_d}$ vanishes at all supersingular points exactly once and remains non-zero elsewhere  
on $X_1^\Delta(\m)_{A/\p}$, 
 $\widetilde{f}/ \widetilde{g_d}$ defines a holomorphic global section in 
$M_{k-(q^d-1)}(\Gamma_1^\Delta(\m))_{A/\p}$. Let 
$f^\prime\in M_{k-(q^d-1)}(\Gamma_1^\Delta(\m))_{A_\p}$ be a lift of $\widetilde{f}/ \widetilde{g_d}$ 
under ~\eqref{Isomorphism}.  
Thus $f\equiv f^\prime \pmod \p$  
since $(g_d)_{\infty}(x) \equiv 1 \pmod \p$.  
This implies $w(\overline{f})<k$.
\end{proof}

\begin{remark} 
Observing that $\Gamma_1^\Delta(\m)\subset \Gamma_0(\m)$, we get $M_{k,l}(\Gamma_0(\m))\subset M_k(\Gamma_1^\Delta(\m))$. Since the order of the determinant group of $\Gamma_1^\Delta(\m)$ is 1, the type does not play any role for Drinfeld modular forms of level $\Gamma_1^\Delta(\m)$. In fact, for a fixed $k$, all $M_{k,l}(\Gamma_1^\Delta(\m))$ are isomorphic 
(cf.~\cite[Page 49]{Boc}).
\end{remark}

\subsection{Proof of Theorem~\ref{Drinfeld_Congruence_Thm_2}:}
Let us recall the statement of this theorem for the convenience of the reader.
\begin{thm}
\label{Drinfeld_Congruence_Thm_2_Text}
Let $\mfm$ be an ideal of $A$ generated by a polynomial in $A$ 
which has a prime factor of degree prime to $q-1$  and $\p \nmid \m$.
Suppose that $f\in M_{k,l}^1(\Gamma_0(\mfp\mfm))$ and $g\in M_{k+2,l+1}^1(\Gamma_0(\mfp\mfm))$ have $\p$-integral $u$-series expansions at $\infty$
with $\Theta f \equiv g \pmod {\p}$. Assume that $w(\overline{F})= (k-1)(q^d-1)+k$, where $F$ is as in Proposition~\ref{ExistenceofF} corresponding to $kf$,
and $(k,p)=1$.
If $f|W_\mfp^{(\mfp\mfm)} = \alpha f$ and $g|W_\mfp^{(\mfp\mfm)}= \beta g$ with $\alpha, \beta \in \{ \pm 1 \}$,  then $\beta = -\alpha$.
\end{thm}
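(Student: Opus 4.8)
The plan is to mirror the structure of the proof of Theorem~\ref{Drinfeld_Congruence_Thm_1}, replacing the level-$1$ filtration machinery by the geometric filtration for $\Gamma_1^\Delta(\m)$ developed in \S\ref{Section_Proof_Thm_2}. As before, I would argue by contradiction, assuming $\beta = \alpha$. Starting from $\Theta f \equiv g \pmod \p$ and the congruence $E^* \equiv E \pmod \p$, equation~\eqref{2nd Derivative} gives $\partial f \equiv g + kE^* f \pmod \p$, so there is $h \in M_{k+2,l+1}^1(\Gamma_0(\p\m))$ with $v_\p(h) \geq 0$ and $g - \partial f + kE^* f = \pi h$. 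Applying $W_\p^{(\p\m)}$ and invoking Proposition~\ref{action of A-L on partial} (which holds at level $\p\m$) together with the hypothesis $\beta = \alpha$ yields $\alpha(g - \partial f) = \pi h|_{k+2,l+1}W_\p^{(\p\m)}$, whence $kE^* f \equiv -\alpha\, \pi h|_{k+2,l+1}W_\p^{(\p\m)} \pmod \p$.

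Next I would feed this through the two key propositions. By Proposition~\ref{ExistenceofF} applied to $kf$ there is $F \in M_{(k-1)(q^d-1)+k,l}^1(\Gamma_0(\m))$ with $kf \equiv F \pmod \p$, and by Proposition~\ref{exists HH} there is $H \in M_{(k-1)q^d+3,l+1}^1(\Gamma_0(\m))$ with $H \equiv \alpha\, \pi h|_{k+2,l+1}W_\p^{(\p\m)} \pmod \p$. Combining these with $E^* \equiv E \equiv -\partial(g_d) \pmod \p$ (Theorem~\ref{Econgruence}) gives the crucial congruence $H \equiv \partial(g_d) F \pmod \p$ of forms on $\Gamma_0(\m)$, hence of forms in $M_k(\Gamma_1^\Delta(\m))_{A_\p}$ via the inclusion noted in the final remark. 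Taking weight filtrations, $w(\overline H) = w(\overline{\partial(g_d) F})$.

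The heart of the argument, and the step I expect to be the main obstacle, is computing $w(\overline{\partial(g_d) F}) = kq^d + 2$ in the $\Gamma_1^\Delta(\m)$ setting, where the clean level-$1$ structure theorem $M(\GL_2(A)) = C[g_1,h]$ is unavailable. Here I would replace Theorem~\ref{Drinfeld level 1 Filtration} by the geometric criterion of Proposition~\ref{Lower Filtration}: $w(\overline{\partial(g_d)F}) < kq^d+2$ precisely when $\widetilde{\partial(g_d) F}$ vanishes at every supersingular point of $X_1^\Delta(\m)_{A/\p}$. The hypothesis $w(\overline F) = (k-1)(q^d-1)+k$ says, again by Proposition~\ref{Lower Filtration}, that $\widetilde F$ does \emph{not} vanish at all supersingular points; so it remains to show $\widetilde{\partial(g_d)}$ has no supersingular zeros. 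The difficulty is to establish a geometric analogue of Theorem~\ref{Drinfeld level 1 Filtration}(3) — that $\partial(g_d)$ and the Hasse invariant $g_d$ share no supersingular zero — which I would deduce from the fact that $\widetilde{g_d}$ vanishes to order exactly one at each supersingular point and that $\partial(g_d) \equiv -E \pmod \p$ is, modulo $\p$, a lift whose divisor is disjoint from the supersingular locus (reflecting, geometrically, the statement that the Hasse invariant has simple zeros and its Serre-derivative does not vanish there). Granting this, $\widetilde{\partial(g_d) F}$ fails to vanish at some supersingular point, so $w(\overline{\partial(g_d)F}) = kq^d+2$.

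Finally, since the weight of $H$ is $(k-1)q^d + 3 < kq^d + 2$, the general bound $w(\overline H) \leq (k-1)q^d+3$ forces
\begin{equation*}
w(\overline H) \leq (k-1)q^d + 3 < kq^d + 2 = w(\overline{\partial(g_d)F}),
\end{equation*}
contradicting the equality $w(\overline H) = w(\overline{\partial(g_d)F})$ obtained above. Therefore the assumption $\beta = \alpha$ is untenable and $\beta = -\alpha$, as claimed. Throughout, the conditions on $\m$ (a prime factor of degree prime to $q-1$, and $\p \nmid \m$) are exactly what is needed to split $(A/\m)^\times = \Delta \oplus \F_q^\times$ and thereby to apply Hattori's results underlying Proposition~\ref{Lower Filtration}.
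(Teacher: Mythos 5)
Your proposal follows the paper's proof essentially step for step: the same contradiction setup, the same chain of congruences producing $H \equiv \partial(g_d)F \pmod{\p}$ via Propositions~\ref{ExistenceofF} and~\ref{exists HH}, and the same use of Proposition~\ref{Lower Filtration} to force $w(\overline{\partial(g_d)F}) = kq^d+2$ against the weight bound $w(\overline{H}) \leq (k-1)q^d+3$. The one step you flag as a potential obstacle — that $\widetilde{\partial(g_d)}$ has no supersingular zeros — is resolved in the paper exactly as you suggest, by invoking the level-$1$ coprimality of $\overline{A_d}$ and $\overline{B_d}$ (Theorem~\ref{Drinfeld level 1 Filtration}(3)) and the fact that supersingular points of $X_1^\Delta(\m)_{A/\p}$ lie above those of $X(1)_{A/\p}$, so no new geometric input is needed.
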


\begin{proof}
We shall prove this theorem by contradiction.  Suppose that $\beta = \alpha$. 
We follow the argument as in the proof of Theorem~\ref{Drinfeld_Congruence_Thm_1_Text}.

Since $E^* \equiv E \pmod \p$ (cf. \eqref{E* is congruenct to E mod p}) and $\Theta f \equiv g \pmod {{\p}}$, we have 
$$\partial f \equiv g + kE^*f \pmod {\p} \qquad (\mrm{cf}.~\eqref{2nd Derivative}).$$
Hence, there exists $h\in M_{k+2,l+1}^1(\Gamma_0(\p\m))$ with $v_\p(h)\geq 0$, such that 
\begin{equation}
\label{h exists}
 g- \partial f + kE^*f = \pi h  .
\end{equation}
Applying $W_\p^{(\p\m)}$ on both sides, we obtain
\begin{equation}
\alpha(g-\partial f) = \pi h|_{k+2,l+1}W_\p^{(\p\m)} \qquad (\mrm{cf}.\ \mrm{Proposition}~\ref{action of A-L on partial}).
\end{equation} 
Combining this with~\eqref{h exists}, we get
\begin{equation}
kE^*f = \pi h - \alpha \pi h|_{k+2,l+1}W_\p^{(\p\m)}
\end{equation}
and hence
\begin{equation} 
 kE^*f  \equiv - \alpha \pi h|_{k+2,l+1}W_\p^{(\p\m)} \pmod \p.
\end{equation}
Since $v_\p(E^*)\geq 0$ and $v_\p(f)\geq 0$, by Proposition~\ref{ExistenceofF}, there exists $F\in M_{(k-1)q^d+1,l}^1(\Gamma_0(\m))$ such that $kf \equiv F \pmod \p$, hence
\begin{equation}
\label{E*,F and h A-L congrueence level pm}
 E^*F  \equiv  - \alpha \pi h|_{k+2,l+1}W_\p^{(\p\m)} \pmod \p.
\end{equation}
By Proposition~\ref{exists HH} and~\eqref{E*,F and h A-L congrueence level pm}, we obtain 
$$E^*F\equiv -H \pmod \p,$$
where $H \in M_{(k-1)q^d+3,l+1}^1(\Gamma_0(\m))$ such that $H\equiv \alpha \pi h|_{k+2,l+1}W_\p^{(\p\m)} \pmod \p$.
 
 Recalling that $E^*\equiv E \equiv -\partial(g_d) \pmod \mfp$, we  get  
\begin{equation}
\label{final congruence}
H\equiv \partial(g_d)F \pmod \p.
\end{equation}
In particular, the last congruence implies that $H$ has $\p$-integral $u$-series expansion at $\infty$.

Since 
both sides of~\eqref{final congruence} are congruent mod $\p$, we have
\begin{equation}
\label{weight filtrations are equal level pm case}
w(\overline{H}) = w(\overline{\partial(g_d)F}).
\end{equation}
Note that the weight of $\partial(g_d)F$ is $kq^d+2$.
We claim that $w(\overline{\partial(g_d)F})=kq^d+2$.  By Proposition~\ref{Lower Filtration}, it suffices to show that $\widetilde{\partial(g_d)F}$ does not vanish
at least at one supersingular point of $X_1^\Delta(\m)_{A/\p}$. 

The vanishing of the function $\widetilde{g_d}$ at all supersingular points of $X(1)_{A/\p}$ implies that
the function $\widetilde{\partial(g_d)}$ does not vanish at any supersingular point of $X(1)_{A/\p}$ (cf. Theorem~\ref{Drinfeld level 1 Filtration}(3)). Since all the supersingular points of $X_1^\Delta(\m)_{A/\p}$ lie above the supersingular points of $X(1)_{A/\p}$, the function $\widetilde{\partial(g_d)}$ does not vanish
at any supersingular point of  $X_1^\Delta(\m)_{A/\p}$ .
On the other hand, since $w(\overline{F})=(k-1)(q^d-1)+k$, the function $\widetilde{F}$ does not vanish at least at one supersingular point of $X_1^\Delta(\m)_{A/\p}$(cf. Proposition~\ref{Lower Filtration}). Thus $\widetilde{\partial(g_d)}\cdot \widetilde{F}$ does not vanish at least at one supersingular point of $X_1^\Delta(\m)_{A/\p}$, and the claim $w(\overline{\partial(g_d)F})=kq^d+2$ follows.


Since the weight of $H$ is $(k-1)q^d+3$,
we conclude that 
\begin{equation*}
w(\overline{H})\leq (k-1)q^d+3 < kq^d+2=w(\overline{\partial(g_d)F}),
\end{equation*}
which contradicts~\eqref{weight filtrations are equal level pm case}.
Therefore, we must have $\beta=-\alpha$.

\end{proof}

\begin{remark}
In the above argument, we have used the equality   $\widetilde{\partial(g_d)F} = \widetilde{\partial(g_d)}\cdot \widetilde{F}$. This follows from Remark~\ref{x-expansion and u-expansion of reduction} and the equality
$\overline{\partial(g_d)F} = \overline{\partial(g_d)} \cdot \overline{F}$ (where $\overline{X}$ refers to the reduction of the $u$-series expansion of $X$ modulo $\p$). 
\end{remark}

\section{Counterexamples}
\label{Section_Counter_Example}
In this section, we shall show that the assumption  $w(\bar{F})=(k-1)(q^d-1)+k$ is necessary in Theorem~\ref{Drinfeld_Congruence_Thm_1}
and Theorem~\ref{Drinfeld_Congruence_Thm_2}.

\subsection{Eigenforms for $W_\p^{(\p\m)}$:}
\label{Section_Pair_Eigenforms}
Recall that $\p$ denotes a prime ideal of $A$ generated by a monic irreducible polynomial $\pi := \pi(T) \in A$ of degree $d$ and $\m$ denotes an ideal of $A$ generated by 
a monic polynomial $m := m(T)\in A$ such that $(\p,\m)=1$ (i.e., $\pi \nmid m$).
We now discuss the existence of eigenforms for the $W_\p^{(\p\m)}$-operator. 

For $f\in M_{k,l}^1(\Gamma_0(\m))$, we have $f|_{k,l}\psmat{\pi}{0}{0}{1} =  \pi^{k/2} f(\pi z) \in M_{k,l}^1(\Gamma_0(\p\m))$. 
By~\cite[Proposition 3.3]{Vin14}, we get $v_\p(f(\pi z))= v_\p(f|V_\p) \geq v_\p (f)$. This implies that $f|_{k,l}\psmat{\pi}{0}{0}{1} \equiv 0 \pmod \p$ when $f$ has $\p$-integral $u$-series expansion at $\infty$.
\begin{lem}
\label{Exp_Eigenvalues}
If $f\in M_{k,l}^1(\Gamma_0(\m))$, then
\begin{enumerate}
\item $(f+f|_{k,l}\psmat{\pi}{0}{0}{1})|_{k,l}W_\p^{(\p\m)}= f+ f|_{k,l}\psmat{\pi}{0}{0}{1}$,
\item $(f-f|_{k,l}\psmat{\pi}{0}{0}{1})|_{k,l}W_\p^{(\p\m)}= -(f- f|_{k,l}\psmat{\pi}{0}{0}{1})$.
\end{enumerate}
\end{lem}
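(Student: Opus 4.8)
The plan is to reduce both identities to a single structural fact: the involution $W_\p^{(\p\m)}$ interchanges $f$ with $f|_{k,l}\psmat{\pi}{0}{0}{1}$. Granting this, write $W := W_\p^{(\p\m)}$, $F_0 := f$ and $F_1 := f|_{k,l}\psmat{\pi}{0}{0}{1}$; then $F_0|_{k,l}W = F_1$ and $F_1|_{k,l}W = F_0$, so by linearity of the slash operator $(F_0 \pm F_1)|_{k,l}W = F_1 \pm F_0 = \pm(F_0 \pm F_1)$, which is precisely statements (1) and (2). I would first note that both $F_0$ and $F_1$ lie in $M_{k,l}^1(\Gamma_0(\p\m))$ — the former because $\Gamma_0(\p\m)\subseteq\Gamma_0(\m)$, the latter by the remark immediately preceding the lemma — so that $W$ genuinely acts on them and the claimed identities live in the right space.

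The heart of the argument is a matrix factorization that exploits the fact that $f$ is modular for $\Gamma_0(\m)$ rather than only for $\Gamma_0(\p\m)$. Take the representative $W = \psmat{\pi}{b}{\pi m}{d\pi}$ with $d\pi^2 - b\pi m = \pi$, equivalently $d\pi - bm = 1$, and set $\gamma := \psmat{1}{b}{m}{d\pi}$. Then $\gamma \in \Gamma_0(\m)$, since its lower-left entry $m$ lies in $\m$ and $\det\gamma = d\pi - bm = 1 \in \F_q^\times$, and a direct multiplication gives
\[
\gamma\,\psmat{\pi}{0}{0}{1} = \psmat{1}{b}{m}{d\pi}\psmat{\pi}{0}{0}{1} = \psmat{\pi}{b}{\pi m}{d\pi} = W .
\]
Applying the cocycle identity $(f|_{k,l}\gamma_1)|_{k,l}\gamma_2 = f|_{k,l}(\gamma_1\gamma_2)$ — which follows from~\eqref{slash operator} because $\det$, the leading-coefficient map $\zeta_\bullet$, and the automorphy factor $cz+d$ are all multiplicative — together with $f|_{k,l}\gamma = f$ (valid as $\gamma\in\Gamma_0(\m)$ and $f\in M_{k,l}^1(\Gamma_0(\m))$), I obtain
\[
f|_{k,l}W = (f|_{k,l}\gamma)|_{k,l}\psmat{\pi}{0}{0}{1} = f|_{k,l}\psmat{\pi}{0}{0}{1},
\]
that is, $F_0|_{k,l}W = F_1$.

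For the reverse relation $F_1|_{k,l}W = F_0$ I would simply invoke that $W_\p^{(\p\m)}$ is an involution on $M_{k,l}(\Gamma_0(\p\m))$, established in \S\ref{AL involution} from $W^2 = \psmat{\pi}{0}{0}{\pi}\gamma'$ with $\gamma'\in\Gamma_0(\p\m)$ and the fact that scalar matrices act trivially under the slash. Since $F_1 = f|_{k,l}\psmat{\pi}{0}{0}{1}\in M_{k,l}^1(\Gamma_0(\p\m))$, this gives $F_1|_{k,l}W = (F_0|_{k,l}W)|_{k,l}W = F_0|_{k,l}W^2 = F_0$. Alternatively, one checks directly that $\psmat{\pi}{0}{0}{1}W = \psmat{\pi}{0}{0}{\pi}\psmat{\pi}{b}{m}{d}$ with $\psmat{\pi}{b}{m}{d}\in\Gamma_0(\m)$, and the same cocycle-plus-modularity argument recovers $F_0$.

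The computation is short, and the only genuine obstacle is spotting the factorization $W = \gamma\,\psmat{\pi}{0}{0}{1}$ with $\gamma\in\Gamma_0(\m)$; everything afterward — the cocycle property and the already-proven involutivity of $W_\p^{(\p\m)}$ — is routine bookkeeping. In writing it up I would be careful to justify explicitly that $F_0$ and $F_1$ lie in the space on which $W_\p^{(\p\m)}$ is defined, since $f$ is only assumed to be modular for $\Gamma_0(\m)$, and to record that the scalar matrix $\psmat{\pi}{0}{0}{\pi}$ acts trivially under $|_{k,l}$, which is what makes the involution step legitimate.
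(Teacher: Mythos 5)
Your proof is correct and follows essentially the same route as the paper's: the key step in both is the factorization $W_\p^{(\p\m)}=\psmat{1}{b}{m}{d\pi}\psmat{\pi}{0}{0}{1}$ with the first factor in $\Gamma_0(\m)$, and your second alternative (the identity $\psmat{\pi}{0}{0}{1}W_\p^{(\p\m)}=\psmat{\pi}{b}{m}{d}\psmat{\pi}{0}{0}{\pi}$) is exactly the paper's computation for the second summand. Your write-up merely spells out more of the bookkeeping (the cocycle property, the trivial action of scalar matrices, and membership of $F_0,F_1$ in $M_{k,l}^1(\Gamma_0(\p\m))$) that the paper leaves implicit.
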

\begin{proof}
\begin{align*}
(f\pm f|_{k,l}\psmat{\pi}{0}{0}{1})|_{k,l}W_\p^{(\p\m)} &= (f\pm f|_{k,l}\psmat{\pi}{0}{0}{1})|_{k,l}\psmat{\pi}{b}{\pi m}{d\pi} \\ 
                                                     &= f|_{k,l}\psmat{\pi}{b}{\pi m}{d\pi}\pm f|_{k,l}\psmat{\pi}{0}{0}{1}\psmat{\pi}{b}{\pi m}{d\pi} \\ 
                                                     &= f|_{k,l}\psmat{1}{b}{m}{d\pi}\psmat{\pi}{0}{0}{1}\pm f|_{k,l}\psmat{\pi}{b}{m}{d}\psmat{\pi}{0}{0}{\pi}\\ 
                                                     &=  f|_{k,l}\psmat{\pi}{0}{0}{1} \pm f
\end{align*}
\end{proof}
Note that the eigenvectors  $f \pm f|_{k,l}\psmat{\pi}{0}{0}{1}$ are oldforms. 

\subsection{Prototype for a counterexample}
\label{Prototype}
Suppose there exists $f \in M_{k,l}^1(\Gamma_0(\m))$ with $\p$-integral $u$-series expansion at $\infty$
such that $\Theta f \equiv fE \pmod \p$. By definition, we  have
$f \pm f|_{k,l}\psmat{\pi}{0}{0}{1} \in M_{k,l}^1(\Gamma_0(\p\m))$. Clearly,
\begin{equation}
\label{f pm equiv f}
f \pm f|_{k,l}\psmat{\pi}{0}{0}{1} \equiv f \pmod \p.
\end{equation}
The above congruence shows that $w(\bar{F})<(k-1)(q^d-1)+k$, where $F$ is as in 
Proposition~\ref{ExistenceofF}, corresponding to $f \pm f|_{k,l}\psmat{\pi}{0}{0}{1}$. 

By~\eqref{f pm equiv f}, we obtain
\begin{equation}
\Theta(f + f|_{k,l}\psmat{\pi}{0}{0}{1}) \equiv \Theta f \equiv fE \equiv fE^* \equiv (f \mp f|_{k,l}\psmat{\pi}{0}{0}{1})E^* \pmod \p.
\end{equation}

According to Lemma~\ref{Exp_Eigenvalues} and Proposition~\ref{Atkin0nE*}, the modular forms
$f + f|_{k,l}\psmat{\pi}{0}{0}{1}$ and $(f \mp f|_{k,l}\psmat{\pi}{0}{0}{1})E^*$ have the same (resp., opposite) sign under the action of $W_\p^{(\p\m)}$. 
So, the existence of such a function $f$ implies that the assumption on the weight filtration of $F$ is necessary
in Theorem~\ref{Drinfeld_Congruence_Thm_1} and Theorem~\ref{Drinfeld_Congruence_Thm_2}. 


\subsection{Counterexamples:}
Here, we shall produce some Drinfeld modular forms $f$ satisfying $\Theta f \equiv fE \pmod \p$,
so that we can apply the above recipe to produce counterexamples.
\begin{itemize}
 \item An easy computation shows that $\partial_{q^2-1} \Delta=0$, i.e., $\Theta \Delta + (q^2-1) E \Delta=0$. Hence, $\Theta \Delta \equiv  \Delta E \pmod \p$.
Taking $f=\Delta$ in the previous section, we conclude that the assumption  
$w(\overline{F})= (k-1)(q^d-1)+k$ in Theorem~\ref{Drinfeld_Congruence_Thm_1} is necessary.

Note that the weight of $\Delta$ is $q^2-1$ and the type is $0$. Since $q>2$, $q^2-1$ can never be $2$. So, this example
does not contradict Corollary~\ref{Drinfeld_Congruence_Cor_1}.
\item Let $\m$ be as in Theorem~\ref{Drinfeld_Congruence_Thm_2}. Consider any non-zero Drinfeld modular form $g\in M_{k,l}(\Gamma_0(\m))$ with $\p$-integral $u$-series expansion at $\infty$. A straightforward calculation shows that $g^{q^i}\Delta\in M_{kq^i+q^2-1,l}^1(\mathrm{\Gamma_0(\m)})$
and $\partial(g^{q^i}\Delta) = 0$, for $i\geq 1$. 
Taking $f=g^{q^i}\Delta$ in the previous section, we conclude that
the assumption  $w(\overline{F})= (k-1)(q^d-1)+k$ in Theorem~\ref{Drinfeld_Congruence_Thm_2} is necessary.

\end{itemize}
In~\cite{BP11}, the authors have produced an example to demonstrate the necessity of the assumption on the weight filtration in their theorem.
In the function field case, we are able to produce infinitely many examples.

\section*{Acknowledgments}  
The authors are grateful to the anonymous referee for his/her valuable mathematical comments, suggestions  which improved the readability of this article. The authors thank Prof. Dr. Gebhard B\"{o}ckle for informing us about the article~\cite{Hat20} and Prof. Shin Hattori, Prof. C\'{e}cile Armana for helpful discussions. The authors also thank Dr. Pradipto Banerjee for his valuable suggestions.
 
The first author thanks University Grants Commission, India for the financial support provided in the form of Research Fellowship to carry out this research work at IIT Hyderabad. The second author's research was partially supported by the SERB grant MTR/2019/000137.

\bibliographystyle{plain, abbrv}

\end{document}